\documentclass[times]{amsart}

\usepackage{amsmath, amsthm, amssymb, amsfonts}
\usepackage{calc}
\usepackage{url}
\usepackage{booktabs}
\usepackage{array}
\usepackage{paralist}
\usepackage{verbatim}
\usepackage{mathcomp}
\usepackage{enumerate}
\usepackage{xfrac}
\usepackage{mathtools}
\usepackage{amscd}
\usepackage{stmaryrd}
\usepackage{float}
\usepackage{cite}
\usepackage{tikz-cd}
\usepackage{graphicx}
\usepackage{adjustbox}
\usepackage{fontenc}
\usepackage{mwe}
\usepackage{caption}
\usepackage{subcaption}
\usepackage{multirow}
\usepackage{lscape}
\usepackage{tkz-graph}
\usepackage{semtkzX}
\usepackage{pbox}
\usepackage{hyperref}

\advance\oddsidemargin -1.5cm
\advance\evensidemargin -1.5cm
\advance\textwidth 2cm
\advance\textheight 0.3cm 

\def\Q{\mathbb{Q}}
\def\Z{\mathbb{Z}}
\def\C{\mathbb{C}}
\def\R{\mathbb{R}}
\def\F{\mathbb{F}}

\def\Gm2{\mathbb{G}_m^2}

\def\Jac{\textup{Jac }\!}

\def\im{\textup{Im}}
\def\prym{\textup{Prym}}

\def\rk{\textup{rk}}
\def\ord{\textup{ord}}
\def\Pic{\textup{Pic}}

\def\coker{\textup{coker}\,}

\input cyracc.def
\font\tencyr=wncyr10
\def\sha{\text{\tencyr\cyracc{Sh}}}

\newcommand*{\MAGMA}{\textsc{magma}\xspace}

\renewcommand{\mod}[1]{\,\,\, (\textup{mod } #1)}
\renewcommand{\epsilon}{\varepsilon}

\numberwithin{equation}{section}
\newtheorem{theorem}{Theorem}[section]
\newtheorem{corollary}[theorem]{Corollary}
\newtheorem{conjecture}[theorem]{Conjecture}
\newtheorem{lemma}[theorem]{Lemma}
\newtheorem{proposition}[theorem]{Proposition}

\theoremstyle{definition}
\newtheorem{definition}[theorem]{Definition}

\newtheorem{remark}[theorem]{Remark}

\setcounter{tocdepth}{1}

\begin{document}

\title{2$^\infty$-Selmer Rank Parities via the Prym Construction}
\author{Jordan Docking}

\address{Department of Mathematics, University College London, London WC1H 0AY, UK}
\email{jordan.docking.18@ucl.ac.uk}

\begin{abstract}
We derive a local formula for the parity of the $2^{\infty}$-Selmer rank of Jacobians of curves of genus $2$ or $3$ which admit an unramified double cover. We give an explicit example to show how this local formula gives rank parity predictions against which the $2$-parity conjecture may be tested. Our results yield applications to the parity conjecture for semistable curves of genus $3$.
\end{abstract}

\subjclass[2020]{11G40 (11G10, 11G30, 14G10, 14H40, 14H45, 14K15)}

\maketitle

\tableofcontents

\section{Introduction}

Let $A$ be an abelian variety over a number field $K$. The Birch--Swinnerton-Dyer conjecture predicts that the Mordell-Weil rank, $\rk(A/K)$, and the order of vanishing of the $L$-function $L(A/K,s)$ at $s=1$ are equal. It is not yet known in general that $L$ can be extended analytically to the point $s = 1$, however it is expected that it can be extended to all of $\C$. Moreover, $L$ is expected to satisfy a functional equation about $s = 1$. If the Birch--Swinnerton-Dyer conjecture is true then the parity of $\rk(A/K)$ should be distinguished by whether $L$ is symmetric or anti-symmetric. Conjecturally, this is controlled by the \textit{root number} $w_{A/K} = \prod_v w_{A/K_v}$ of $A$, where the product ranges over all places $v$ of $K$, and the terms $w_{A/K_v}$ are \textit{local} root numbers.

\begin{conjecture}[Parity Conjecture]
Let $A/K$ be an abelian variety over a number field. Then
$$
(-1)^{\rk (A/K)} = \prod_{v} w_{A/K_v}.
$$
\end{conjecture}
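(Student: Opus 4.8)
The plan is to reduce the conjecture to a purely cohomological \emph{parity} statement about a Selmer group, to which arithmetic duality and representation-theoretic tools apply, and then to carry out a local-to-global comparison place by place. The Mordell--Weil rank $\rk(A/K)$ is not directly accessible, so the first move is to replace it by a Selmer corank. Granting finiteness of the Tate--Shafarevich group $\sha(A/K)$, for any prime $p$ the descent exact sequence
$$
0 \to A(K)\otimes_{\Z}\Q_p/\Z_p \to \Sel_{p^\infty}(A/K) \to \sha(A/K)[p^\infty]\to 0
$$
has trivial divisible part on the right, so that
$$
\operatorname{corank}_{\Z_p}\Sel_{p^\infty}(A/K)=\rk(A/K).
$$
Because the right-hand product $\prod_v w_{A/K_v}$ is independent of $p$, it then suffices to prove the $p$-\emph{parity} identity
$$
(-1)^{\operatorname{corank}_{\Z_p}\Sel_{p^\infty}(A/K)}=\prod_{v} w_{A/K_v}
$$
for a single convenient prime $p$, an object now amenable to Galois cohomology and Iwasawa theory.

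To prove this $p$-parity identity I would use the method of regulator constants of T.\ and V.\ Dokchitser. The idea is to create extra symmetry by passing to a finite Galois extension $F/K$ with group $G$ admitting a Brauer relation $\sum_i n_i\,\Z[G/H_i]=0$ in the rational representation ring. The associated \emph{regulator constant}, a rational number assembled from the N\'eron--Tate height pairing on the intermediate fields $F^{H_i}$, has a $p$-adic valuation whose parity encodes the entire global comparison. The Cassels--Tate pairing on $\Sel_{p^\infty}(A/F)$, combined with the self-duality of $A$ up to isogeny, then forces this parity to be rewritten as a sum of purely \emph{local} contributions, one attached to each place, with no surviving global term.

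The heart of the matter is the local matching: one must show place by place that the local invariant produced by the regulator-constant/Cassels--Tate computation equals the analytically-defined local root number $w_{A/K_v}$. At archimedean places and at primes of good reduction this is a direct calculation. At primes of multiplicative reduction it follows from the structure of the component group and the Tamagawa numbers together with a Kramer--Tunnell-type formula relating the local norm index to the local root number. The genuinely delicate cases are the primes of additive reduction, where the matching requires the explicit N\'eron model and control of the behaviour of the $p$-adic representation under inertia.

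The main obstacle, and the reason the assertion remains a conjecture for an arbitrary abelian variety, is precisely this additive-reduction local comparison together with the need for a Brauer relation adapted to $A$ and $p$. For elliptic curves over number fields the Dokchitser formalism, supplemented by Nekov\'a\v{r}'s Selmer-complex arguments, completes the local matching and yields $p$-parity unconditionally; but in higher dimension the local root number has no uniformly computable arithmetic avatar at additive primes, and the required representation-theoretic input is not available for every $A$. One therefore expects to succeed only under hypotheses on the reduction type or on the endomorphism and isogeny structure of $A$. This is exactly the strategy of the present paper: the Prym construction supplies such extra structure for Jacobians of genus $2$ and $3$ curves admitting an unramified double cover, making the otherwise intractable local comparison explicit and computable.
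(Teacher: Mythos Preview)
The statement you were asked to prove is labeled \emph{Conjecture} in the paper, and the paper makes no attempt to prove it in general; it is stated as background and motivation only. There is therefore no ``paper's own proof'' to compare against. What the paper actually proves is something much more restricted: a local formula for the $2^\infty$-Selmer rank parity of $\Jac C\times\prym(D/C)$ when $C$ has genus $2$ or $3$ and admits an unramified double cover (Theorem~\ref{localformula}), together with a reduction (Theorem~\ref{reduction}) showing that Conjecture~\ref{loceqroot} plus finiteness of $\sha$ would yield the parity conjecture for such $C$.

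Your proposal is not a proof but a survey of the regulator-constant/Selmer-complex circle of ideas, and you yourself concede in the final paragraph that ``the assertion remains a conjecture for an arbitrary abelian variety.'' That concession is the genuine gap: every step you list as ``delicate'' or requiring ``hypotheses on the reduction type'' is precisely a step that is not known in general. In particular, the local matching at additive primes is not established for arbitrary $A$, no Brauer relation adapted to an arbitrary $A$ and $p$ is known to exist, and you invoke finiteness of $\sha(A/K)$ at the outset, which is itself open. So the proposal outlines a plausible strategy and correctly diagnoses its obstructions, but it does not close them, and hence does not prove the conjecture. This is consistent with the paper, which treats the statement as open.
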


For $p$ prime, write $\rk_p(A/K) = \rk(A/K) + \delta_p$ for the $p^{\infty}$-Selmer rank, where $\delta_p$ is the multiplicity of $\Q_p / \Z_p$ in $\sha(A/K)$. If $\sha(A/K)$ is finite then $\rk_p(A/K) = \rk(A/K)$ for all $p$.

\begin{conjecture}[$p$-Parity Conjecture]
Let $A/K$ be an abelian variety over a number field, and $p$ prime. Then
$$
(-1)^{\rk_p (A/K)} = \prod_{v} w_{A/K_v}.
$$
\end{conjecture}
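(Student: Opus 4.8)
The statement as written is the $p$-Parity Conjecture in full generality, which is open; what I can propose is the standard local--global strategy by which the known cases are established and which the present paper is built to exploit. The plan is to compare the two sides by manufacturing, for each place $v$, a local quantity whose product over all $v$ computes the right-hand side (the global root number) and whose product simultaneously computes the left-hand side (the parity of the $p^\infty$-Selmer rank). Recall that $\rk_p(A/K) = \rk(A/K) + \delta_p$ is the corank of $\Sel_{p^\infty}(A/K)$, so the whole problem is to show that $(-1)^{\rk_p(A/K)}$ agrees with $\prod_v w_{A/K_v}$.

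First I would exploit an isogeny $\phi \colon A \to A'$ of $p$-power degree; in the setting of this paper such an isogeny is furnished by the Prym construction attached to the unramified double cover, relating $\Jac$ of the curve to a product involving $\prym$. Attached to $\phi$ and its dual $\hat\phi$ are the $\phi$-Selmer and $\hat\phi$-Selmer groups, and a Cassels--Tate-type formula expresses the ratio of their orders as a product of purely local Selmer ratios $c_v(\phi)$. Combining this with the exact sequences relating $\Sel_\phi$ to $\Sel_{p^\infty}$ and taking $p$-adic valuations, one obtains
$$
(-1)^{\rk_p(A/K)} = \prod_v (-1)^{\ord_p c_v(\phi)},
$$
up to a global contribution that one checks is trivial on parity.

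The crux is then a purely local identity: for every place $v$ one must prove $(-1)^{\ord_p c_v(\phi)} = w_{A/K_v}$, i.e.\ that the sign of the local Selmer ratio matches the local root number. Here the Dokchitser--Dokchitser theory of regulator constants, together with the explicit computation of local root numbers, is the natural engine; the archimedean places and the places of good reduction are routine, and the hard part will be the places of bad reduction, where both $w_{A/K_v}$ and the N\'eron-model data entering $c_v(\phi)$ must be matched case by case. This bad-reduction matching is precisely the difficulty that the main local formula of the paper is designed to resolve in genus $2$ and $3$: it reduces the comparison at each $v$ to the combinatorics of the unramified double cover, so that the global parity can be read off from local data and tested against $\prod_v w_{A/K_v}$.
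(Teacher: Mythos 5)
The statement you are asked about is a \emph{conjecture}: the paper states the $p$-parity conjecture and offers no proof of it (nor could it — the general case is open). You correctly recognise this and instead sketch the standard local--global strategy, which does broadly match the two-step framework the paper describes in its introduction: first express $(-1)^{\rk_p(A/K)}$ as a product of local terms via an isogeny of $p$-power degree (here the Prym isogeny $\phi$ with $\phi^t\phi=[2]$), then compare those local terms with local root numbers. To that extent your proposal is a fair description of the programme, and of where the paper sits inside it (it carries out only the first step, Theorem \ref{localformula}).

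Two points in your sketch are, however, stated too strongly and would fail as written. First, the purely local identity $(-1)^{\ord_p c_v(\phi)} = w_{A/K_v}$ is \emph{not} what is true or even expected place by place: in all known cases the local Selmer term differs from the local root number by an error term $e_v$ (typically a product of Hilbert symbols), and only the global product $\prod_v e_v$ is trivial. The paper is explicit about this, and accordingly conjectures only the global product formula (Conjecture \ref{loceqroot}), emphasising in a remark that no local error term is even proposed here. Second, your phrase ``up to a global contribution that one checks is trivial on parity'' conceals a genuinely non-trivial ingredient: the $2$-primary part of $\sha$ modulo squares is controlled by the Poonen--Stoll deficiency terms $\mu_{v,C}$, $\mu_{v,D}$, $\delta_{v,\prym(D/C)}$, and these enter the local formula as honest local factors rather than cancelling; this is precisely why the restriction to genus $2$ and $3$ appears (so that the Prym variety is a Jacobian, a product of Jacobians, or a Weil restriction, and the Poonen--Stoll criterion applies). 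So your outline is a reasonable map of the territory, but it is not a proof, and the two steps you compress into single sentences are exactly where the open difficulties lie.
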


We specialise to the $p=2$ case. $2^\infty$-Selmer (and indeed $2$-Selmer) groups are central objects in the study of abelian varieties. The standard $2$-descent procedure computes $2^\infty$-Selmer groups, making them the best method of getting rank bounds. Mazur and Rubin use $2^\infty$-Selmer groups to show that elliptic curves with trivial Mordell--Weil group exist over any number field, along with applications to Hilbert's Tenth Problem \cite{mazurrubin}. Bhargava and Shankar use $2$-Selmer groups to bound the average rank of elliptic curves \cite{bhargavashankar}, whilst similar estimates on the average size of $2$-Selmer groups can be obtained for Jacobians of hyperelliptic curves with a rational Weierstrass point \cite{bhargavagross}. Alexander Smith has shown that $2^\infty$-Selmer groups behave in line with heuristics amongst quadratic twists of elliptic curves, with applications to Goldfeld's conjecture \cite{smith2inftyselmer}. 

The $2$-parity conjecture is known for elliptic curves over $\Q$ and more generally over totally real fields \cite{notesonpc}.  It has also been shown for large classes of abelian surfaces over number fields \cite{dokchitser2020parity}. Assuming the finiteness of the Tate--Shafarevich group, the parity conjecture is known for elliptic curves over number fields (again see \cite{notesonpc}).

We describe a common approach to the $2$-parity conjecture, which comes in two steps. First, one expresses the $2^\infty$-Selmer parity in terms of a \textit{local formula}
$$
(-1)^{\rk_2 A} = \prod_{v} \lambda_{A/K_v},
$$
where the terms $\lambda_{A/K_v}$ depend on the abelian variety only locally. Second, one controls the discrepancy between the local terms $\lambda_{A/K_v}$ and the local root numbers $w_{A/K_v}$ as an error term $\lambda_{A/K_v} = e_v \cdot w_{A/K_v}$, such that $\prod_v e_v = 1$.

Local formulae can be of particular value as they give information about $2^\infty$-Selmer groups of abelian varieties without the unhelpful baggage of global data, in principle making them straightforward to evaluate. It is thus of interest to develop local formulae for certain classes of abelian varieties.

\subsection{Main Result} The present article proves a local formula when $A = \Jac(C)$ is the Jacobian of a curve $C$ of genus $g = 2$ or $3$ with a $K$-rational double cover $\pi \colon D \to C$. In the genus $2$ case, this is implied by the existence of a $K$-rational $2$-torsion point, whilst if $C$ has genus $3$, then a $K$-rational $2$-torsion point along with a $K$-rational point of $C$ gives rise to such a double cover. Such curves have an associated  \textit{Prym variety} $\prym(D/C)$, and isogeny $\phi \colon \Jac C \times \prym(D/C) \to \Jac D$. We illustrate the main theorem as it applies typically.

\begin{theorem}[cf. Theorem \ref{localformula}]
\label{easylocform}
Let $C$ be a smooth projective curve\footnote{Throughout, curves will be assumed to be smooth and projective.} of genus $2$ or $3$ over a number field $K$ with unramified double cover $\pi \colon D \to C$. Suppose moreover that the induced Prym variety is a Jacobian, $\prym(D/C) = \Jac(F)$. Suppose further that $C, D$ and $F$ have points locally for every completion of $K$, and write $\phi_v \colon \Jac C (K_v) \times \Jac F (K_v) \to \Jac D (K_v)$ for the map on local points. Then 
$$
(-1)^{\rk_2 \Jac C + \rk_2 \Jac F} = \prod_v (-1)^{\dim_{\F_2} \left( \frac{\coker \phi_v}{\ker \phi_v} \right) }.
$$
\end{theorem}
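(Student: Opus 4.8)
The plan is to read the left side as $(-1)^{\rk_2 A}$ with $A = \Jac C \times \Jac F$ and $B = \Jac D$, and to pin down the parity of $\rk_2 A$ from the Prym isogeny $\phi \colon A \to B$ and its dual $\hat\phi$. First I would record the structural features of $\phi$ supplied by the Prym construction for an unramified cover: because $\pi^*\Jac C$ and $\prym(D/C)$ are complementary abelian subvarieties of the principally polarized $B$ and the restricted polarization on each is twice a principal one, $\phi$ pulls back the principal polarization of $B$ to twice the product principal polarization of $A$. Identifying $\hat A \isom A$ and $\hat B \isom B$ via the principal polarizations, this says $\hat\phi \circ \phi = [2]_A$; the kernel $\ker\phi$ lies in $A[2]$ and is isotropic for the Weil pairing, and there is an exact sequence of Galois modules $0 \to \ker\phi \to A[2] \to \ker\hat\phi \to 0$ with $\ker\hat\phi$ the Cartier dual of $\ker\phi$. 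These are the only geometric inputs I will use.

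Next I would replace the $2^\infty$-Selmer corank by finite-level data. As $A$ is a product of Jacobians it is principally polarized, so the Cassels--Tate pairing is alternating on $\sha(A)[2^\infty]$ modulo its divisible part; combined with $0 \to A(K)/2 \to \Sel_2(A) \to \sha(A)[2] \to 0$ this gives $\rk_2 A \equiv \dim_{\F_2}\Sel_2(A) + \dim_{\F_2} A(K)[2] \pmod{2}$. The factorisation $[2]_A = \hat\phi \circ \phi$ then feeds the standard descent exact sequence for a composite isogeny, linking $\Sel_\phi(A)$, $\Sel_2(A)$ and $\Sel_{\hat\phi}(B)$, and expresses $\dim_{\F_2}\Sel_2(A)$ modulo $2$ as $\dim_{\F_2}\Sel_\phi(A) + \dim_{\F_2}\Sel_{\hat\phi}(B)$ together with explicit finite corrections built from $A(K)[2]$, the rational points of $\ker\phi$ and $\ker\hat\phi$, and the connecting maps.

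The arithmetic heart is the comparison of $\Sel_\phi(A)$ with $\Sel_{\hat\phi}(B)$. Applying the Greenberg--Wiles global Euler characteristic formula to $\ker\phi$ and its Cartier dual $\ker\hat\phi$, and using Tate local duality to see that the local conditions defining the two Selmer groups are exact orthogonal complements, I obtain
$$
\frac{\#\Sel_\phi(A)}{\#\Sel_{\hat\phi}(B)} = \frac{\#H^0(K,\ker\phi)}{\#H^0(K,\ker\hat\phi)} \prod_v \frac{\#\coker\phi_v}{\#\ker\phi_v},
$$
where $\ker\phi_v = H^0(K_v,\ker\phi)$ and $\coker\phi_v = B(K_v)/\phi A(K_v)$ is the local Kummer image, both $\F_2$-vector spaces. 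Taking $\ord_2$ produces exactly $\sum_v \bigl( \dim_{\F_2}\coker\phi_v - \dim_{\F_2}\ker\phi_v \bigr)$, the exponent in the theorem. The hypothesis that $C$, $D$ and $F$ have points over every completion $K_v$ is used precisely here: it ensures that the $K_v$-points of each Jacobian coincide with the corresponding $\Pic^0$ over $K_v$, so that $\phi_v$ on local points of Jacobians really represents the isogeny on $K_v$-points and the groups $\ker\phi_v$, $\coker\phi_v$ agree with the cohomological ones above.

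Finally I would combine the three displays. All the global contributions---the ratio $\#H^0(K,\ker\phi)/\#H^0(K,\ker\hat\phi)$ from Greenberg--Wiles, the factor $\dim_{\F_2}A(K)[2]$ from Cassels--Tate, and the finite corrections from the descent sequence---must cancel modulo $2$, leaving $\rk_2 A \equiv \sum_v \dim_{\F_2}(\coker\phi_v/\ker\phi_v) \pmod{2}$, which is the assertion. I expect this cancellation to be the main obstacle: it is not formal, but is forced by the exact sequence $0 \to \ker\phi \to A[2] \to \ker\hat\phi \to 0$ together with global (Poitou--Tate) duality, which tie the various $H^0$ and $H^1$ dimensions of these $\F_2$-Galois modules to one another. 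I would carry out that bookkeeping term by term, using the isotropy of $\ker\phi$ in $A[2]$ to match the kernel and cokernel contributions, rather than expect a black-box cancellation.
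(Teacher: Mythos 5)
There is a genuine gap, and it sits exactly at the point where the hypothesis ``$C$, $D$ and $F$ have points everywhere locally'' actually does its work. You assert that because $A = \Jac C \times \Jac F$ is principally polarized, the Cassels--Tate pairing on $\sha(A)[2^\infty]$ modulo its divisible part is alternating, and you deduce $\rk_2 A \equiv \dim_{\F_2}\Sel_2(A) + \dim_{\F_2}A(K)[2] \pmod 2$. A principal polarization only makes the pairing \emph{antisymmetric}, not alternating; the whole point of Poonen and Stoll's work (cited in the paper as \cite{poonenandstoll}) is that for a Jacobian the failure of the pairing to be alternating is measured by deficiency, so that $|\sha^{\textup{nd}}[2^\infty]|$ can be twice a square, and $\dim_{\F_2}\sha^{\textup{nd}}[2]$ can be odd. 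Without that input your congruence between $\rk_2 A$ and Selmer-group dimensions can fail by $1$, which is fatal for a parity statement. The local-points hypothesis is needed precisely to guarantee that $C$, $D$ and $F$ are nowhere deficient, so that Poonen--Stoll forces the non-divisible $2$-primary $\sha$ of $\Jac C$, $\Jac F$ and $\Jac D$ to have square order. You instead spend this hypothesis on identifying $\Jac(K_v)$ with $\Pic^0$ over $K_v$, which is not where it is needed: the groups $\ker\phi_v$ and $\coker\phi_v = \Jac D(K_v)/\phi(A(K_v))$ are intrinsically defined on $K_v$-points of the abelian varieties and require no such identification.

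Beyond that, your route is recognisably the same computation the paper performs, but unrolled: the paper cites \cite[Thm.~4.3]{BSDModSquares} as a black box to get
$2^{\rk_2 \Jac C + \rk_2 \Jac F} = \square \cdot \prod_v \frac{|\coker\phi_v|}{|\ker\phi_v|}\cdot \frac{|\sha^{\textup{nd}}_{A/K}[2^\infty]|}{|\sha^{\textup{nd}}_{\Jac D/K}[2^\infty]|}$,
and then disposes of the $\sha$ ratio by Poonen--Stoll (trivially here, since nothing is deficient; in Theorem \ref{localformula} this is where the $\mu_{v,C},\mu_{v,D},\delta_{v,\prym(D/C)}$ terms come from). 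Your Greenberg--Wiles/Poitou--Tate bookkeeping is essentially a re-proof of that cited theorem, and the ``main cancellation'' you defer is exactly its content; it is doable, but you have not carried it out, and note that it also requires handling the places above $2$ and the archimedean places correctly in the Euler-characteristic formula (the clean local product you wrote hides N\'eron-differential and component contributions at such places, which is why the paper's Lemma \ref{lockercoker} treats $p=2$ separately). To repair the proposal: replace the ``principally polarized $\Rightarrow$ alternating'' step by an appeal to \cite{poonenandstoll} using non-deficiency of $C$, $D$ and $F$, and either cite the isogeny-invariance-mod-squares result or complete the duality bookkeeping explicitly.
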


In the full theorem, both the condition on the Prym variety being a Jacobian and the condition on local points can be removed. The right-hand side is again a product of terms, written $\lambda_{C/K_v, \phi_v}$ (see Definition \ref{lambdadef}), where once again the data depends on the curve only locally.

We are lead to conjecture the following product formula; along with Theorem \ref{localformula} this gives the parity conjecture for product abelian varieties of the form $\Jac (C) \times \prym(D/C)$.

\begin{conjecture}
\label{loceqroot}
Whenever $C$ is a curve of genus  $2$ or $3$ over a number field $K$ with unramified double cover $\pi \colon D \to C$ and associated Prym variety $\prym(D/C)$ over $K$,
$$
\prod_v \lambda_{C/K_v, \phi_v} \cdot w_{\Jac C/K_v} \cdot w_{\prym(D/C)/K_v} = 1.
$$
\end{conjecture}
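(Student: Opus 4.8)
The plan is to follow the two-step strategy from the introduction, writing the left-hand side as a product of local error terms $e_v := \lambda_{C/K_v,\phi_v}\cdot w_{\Jac C/K_v}\cdot w_{\prym(D/C)/K_v}\in\{\pm1\}$ and proving $\prod_v e_v=1$. The first simplification is to exploit the isogeny $\phi$: local root numbers are isogeny invariants and are multiplicative in products, so $w_{\Jac C/K_v}\cdot w_{\prym(D/C)/K_v}=w_{(\Jac C\times\prym(D/C))/K_v}=w_{\Jac D/K_v}$. Hence the conjecture is equivalent to $\prod_v \lambda_{C/K_v,\phi_v}\cdot w_{\Jac D/K_v}=1$, which recasts it, place by place, as a comparison between a purely arithmetic quantity (the $\lambda$-term, governed by $\coker\phi_v/\ker\phi_v$ as in Definition \ref{lambdadef}) and an $\epsilon$-factor quantity (the local root number of $\Jac D$).

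First I would reduce the infinite product to a finite one. At a place $v$ at which $C$, $D$ and $F$ all have good reduction, which lies above an odd prime, and at which $\pi$ is unramified, the local root numbers are trivial and $\phi_v$ is an isomorphism on the relevant local groups, so $\lambda_{C/K_v,\phi_v}=1$ and $e_v=1$. This leaves a finite bad set $S$ comprising the archimedean places, the places above $2$, and the finite places of bad reduction or of ramification of the cover.

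Next I would compute both factors of $e_v$ explicitly for $v\in S$, organised by the reduction type of $C$ (together with that of $D$ and $F$) and using the Prym description of $\Jac D$. For the root number I would appeal to the known local formulas for $w_{\Jac D/K_v}$ in terms of the Weil--Deligne representation attached to the Tate module of $D$; for $\lambda_{C/K_v,\phi_v}$ I would use Definition \ref{lambdadef} together with local (Tate) duality to rewrite $\dim_{\F_2}(\coker\phi_v/\ker\phi_v)$ in terms of the same reduction data. The aim is to recognise $e_v$ as the value at $v$ of a single global object — concretely, a Hilbert symbol $(\alpha,\beta)_v$ attached to a square class manufactured from the double cover $\pi$ (for example the discriminant of an associated extension, or the class in $K^\times/(K^\times)^2$ cutting out the relevant quadratic data). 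Granting such an identification, the product formula for the Hilbert symbol — equivalently Artin reciprocity, or the vanishing of the total invariant of a global Brauer class — yields $\prod_v e_v=\prod_v(\alpha,\beta)_v=1$, as required; this is exactly the mechanism that, combined with Theorem \ref{localformula}, produces the parity conjecture for $\Jac C\times\prym(D/C)$.

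The main obstacle will be the explicit local comparison of the third step at the hardest places. The archimedean and additive-reduction contributions require careful bookkeeping of both the $\epsilon$-factor and the local image of $\phi_v$, but the genuinely delicate case is at the dyadic places, where wild ramification makes both the root number and the local point-counting awkward to control, and where the $\F_2$-arithmetic underlying $\lambda$ is most sensitive. Establishing that both sides assemble, uniformly across reduction types, into the same reciprocity symbol with no residual sign — and pinning down that this symbol is precisely the one furnished by the Prym construction — is where the real work lies.
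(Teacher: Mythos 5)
The statement you are trying to prove is Conjecture~\ref{loceqroot}: the paper does not prove it, and explicitly disclaims any attempt to do so. The only evidence offered in the paper is numerical (the worked example and the 419 curves in the $2$-adic neighbourhood search). So there is no ``paper's own proof'' to compare against, and your proposal should be judged as a proof on its own terms --- and as such it has a genuine gap.

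Your first two steps are fine: root numbers are isogeny-invariant and multiplicative, so the conjecture is equivalent to $\prod_v \lambda_{C/K_v,\phi_v}\cdot w_{\Jac D/K_v}=1$, and all but finitely many local terms are trivial. But the entire content of the conjecture lives in your third step, and there you write ``Granting such an identification'' of $e_v$ with a Hilbert symbol $(\alpha,\beta)_v$ --- that identification is precisely the open problem, not a step one can grant. Nothing in your proposal produces the square classes $\alpha,\beta$, verifies the identity case by case, or even argues that such a global object must exist; the product formula for the Hilbert symbol only finishes the argument once the local identity $e_v=(\alpha,\beta)_v$ has actually been established at every place, including the archimedean, additive, deficient, and dyadic ones. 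The paper's own remarks explain why this is expected to be hard: in the known cases (Kramer--Tunnell, Fisher's appendix, Dokchitser--Maistret) the error terms were found by brute-force case analysis with no conceptual interpretation, the genus-$2$ formula already involves a substantial number of Hilbert-symbol factors, and for genus $3$ (where the relevant invariants are of Dixmier--Ohno complexity) the author suspects any error term would be unmanageable. Your proposal is a correct description of the standard strategy one \emph{would} attempt, and correctly locates the difficulty at the dyadic and bad-reduction places, but it is a research programme rather than a proof.
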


Note that $\prod_v \lambda_{C/K_v, \phi_v}$ controls the $2^{\infty}$-Selmer parity of $\Jac C \times \prym(D/C)$, and a priori Conjecture \ref{loceqroot} does not give the Parity Conjecture for $C$ alone. However we can telescope the Prym construction.

\begin{theorem}[= Theorem \ref{reduction}]
Suppose Conjecture \ref{loceqroot} holds. Then the $2$-parity conjecture holds for all curves $C/K$  either of genus $2$ or of genus $3$ with a $K$-rational point, where $K$ is any number field such that $\textup{Gal}(L/K)$ is a $2$-group (and $L = K(\Jac C[2])$).

\end{theorem}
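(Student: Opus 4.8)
The plan is to use Theorem \ref{localformula} together with Conjecture \ref{loceqroot} as an engine that transfers the $2$-parity conjecture from $\Jac C$ to its associated Prym variety, and then to iterate this transfer along a chain of Pryms until reaching an elliptic curve, for which $2$-parity is already known. For the transfer step, I would first note that $2^{\infty}$-Selmer ranks are additive over products, so $\rk_2\!\big(\Jac C \times \prym(D/C)\big) = \rk_2 \Jac C + \rk_2 \prym(D/C)$ and Theorem \ref{localformula} reads $(-1)^{\rk_2 \Jac C + \rk_2 \prym(D/C)} = \prod_v \lambda_{C/K_v,\phi_v}$. Since Conjecture \ref{loceqroot} is an identity of signs, it rearranges to $\prod_v \lambda_{C/K_v,\phi_v} = \prod_v w_{\Jac C/K_v}\, w_{\prym(D/C)/K_v} = w_{\Jac C/K}\, w_{\prym(D/C)/K}$. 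Comparing the two, the two parity discrepancies multiply to $1$, so the $2$-parity conjecture holds for $\Jac C$ if and only if it holds for $\prym(D/C)$.

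I would then induct on dimension. An unramified double cover of a genus $g$ curve has genus $2g-1$, so $\prym(D/C)$ has dimension $g-1$; thus a genus $3$ curve yields an abelian surface and a genus $2$ curve yields an elliptic curve, and the dimension strictly decreases at each stage so the process terminates. To run the induction I must produce, at each stage, an unramified double cover, which requires a $K$-rational nonzero $2$-torsion point on the relevant Jacobian. This is where the hypothesis on $K$ enters: a finite $2$-group acting on $\Jac[2] \cong \F_2^{2g}$ fixes a nonzero vector, since $\Jac[2] \setminus \{0\}$ has odd cardinality, so $\textup{Gal}(L/K)$ being a $2$-group guarantees such a point. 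The genus $3$ starting curve additionally needs a $K$-rational point to build its cover, which is supplied by hypothesis; the genus $2$ stages reached thereafter need only the rational $2$-torsion point. When the Prym surface is a product of elliptic curves rather than a Jacobian I would split it into its factors and treat each directly; the full version of Theorem \ref{localformula} permits this, as it requires neither that the Prym be a Jacobian nor that it have local points.

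The main obstacle is to show that the hypothesis propagates: that $\textup{Gal}\big(K(\prym(D/C)[2])/K\big)$ is again a $2$-group, so that the inductive step can be reapplied. Since $\prym(D/C)$ is a $K$-subvariety of $\Jac D$ and the $K$-isogeny $\phi$ has $2$-power degree, the image of Galois on the $2$-adic Tate module $T_2 \prym(D/C)$ is a subquotient of that on $T_2 \Jac D$, and — using that the kernel of $GL_{2g}(\Z_2) \to GL_{2g}(\F_2)$ is pro-$2$ — it suffices to prove that $K(\Jac D[2])$ is a $2$-extension of $K$. I would establish this directly from the geometry of the étale double cover, bounding $K(\Jac D[2])$ in terms of $L = K(\Jac C[2])$ and the covering datum rather than through the Prym decomposition. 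Verifying that the $2$-torsion of $\Jac D$ introduced by the cover lies in a $2$-extension of $L$ is the delicate point, and I expect it to be the technical heart of the argument.

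Finally, the base of the induction is an elliptic curve $E$ (a Prym of dimension $1$, or a factor of a product Prym) with $\textup{Gal}(K(E[2])/K)$ a $2$-group. Since the image lies in $GL_2(\F_2) \simeq S_3$, such a group has order at most $2$ and fixes a nonzero point, so $E$ admits a $K$-rational $2$-isogeny; for such elliptic curves the $2$-parity conjecture is known. Feeding this back up the chain of \emph{if and only if} statements from the first paragraph then yields $2$-parity for each intermediate Prym and ultimately for the original $\Jac C$, completing the proof.
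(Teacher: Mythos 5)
Your overall strategy is the same as the paper's: use Theorem \ref{localformula} plus Conjecture \ref{loceqroot} to transfer $2$-parity between $\Jac C$ and $\prym(D/C)$, iterate down in dimension, and bottom out at elliptic curves with a rational $2$-isogeny. The transfer step and the fixed-point argument producing a $K$-rational $2$-torsion point are fine. But the step you yourself flag as ``the technical heart'' --- that $\textup{Gal}\bigl(K(\prym(D/C)[2])/K\bigr)$ is again a $2$-group --- is left unproven, and this is precisely the nontrivial content that makes the induction run. A proof that defers its key lemma to ``I expect it to be the delicate point'' has a genuine gap there.

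Moreover, the route you sketch for closing that gap (bounding $K(\Jac D[2])$ over $K$ and passing through subquotients of $T_2\Jac D$) is an unnecessary detour. The paper's Lemma \ref{2tors} settles the matter in a few lines without ever touching $\Jac D[2]$: one checks that $\prym(D/C)[2]\subset\ker(\phi^t)$ directly from the formula $\phi^t(x)=(\pi_*x,\,x-\iota^*x)$, and that $\ker(\phi^t)=\pi^*(\Jac C[2])$ by a containment-plus-counting argument using $\ker\pi^*=\{0,\epsilon\}$. Hence every $2$-torsion point of the Prym is the pullback of a $2$-torsion point of $\Jac C$, so $K(\prym(D/C)[2])\subset L$ and its Galois group over $K$ is a quotient of $G$, hence a $2$-group. (Your claim about $K(\Jac D[2])$ is in fact true --- the fibres of $\phi^t$ over the $L$-rational group $\ker\phi$ are torsors under $\pi^*(\Jac C[2])$ with trivial $\textup{Gal}(\bar L/L)$-action, so $L(\Jac D[2])/L$ is elementary abelian of exponent $2$ --- but proving it costs more than the direct route and buys nothing.) One further omission: in genus $3$ the Prym surface can also be a Weil restriction $\mathfrak{R}_{K(\sqrt d)/K}(E)$ (Case III.c of Table \ref{prymtable}), which is not a product of elliptic curves over $K$; you handle only the Jacobian and product cases, whereas the paper treats this case separately by deducing $2$-parity for $E/K(\sqrt d)$ and noting it is preserved under Weil restriction.
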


In view of \cite[Thm.~B.1]{dokchitser2020parity} (= Theorem \ref{2group}), this reduces the parity conjecture for a large class of curves of genus $2$ or $3$ to Conjecture \ref{loceqroot}, and the finiteness of the Tate-Shafarevich group.

\begin{corollary}[= Corollary \ref{semistableparity}]
Suppose Conjecture \ref{loceqroot} holds. Let $C/K$ be a semistable curve over a number field $K$, where $C$ is either of genus $2$, or of genus $3$ with a $K$-rational point. If $\sha(\Jac C/L)$ is finite (where $L$ is full the $2$-torsion field of $C/K$), then the parity conjecture holds for $C/K$.
\end{corollary}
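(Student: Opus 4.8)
The plan is to deduce the full parity conjecture for $\Jac C/K$ from the $2$-parity statement already available over fields with $2$-power torsion action, using finiteness of $\sha$ to bridge $\rk_2$ and $\rk$, and using Theorem \ref{2group} to descend to the base field. Write $A=\Jac C$, $L=K(A[2])$ and $G=\textup{Gal}(L/K)$, and recall that by hypothesis Conjecture \ref{loceqroot} holds, $C$ is semistable, and $\sha(A/L)$ is finite. First I would collect the $2$-parity input. For any $2$-subgroup $H\le G$ the fixed field $F=L^{H}$ satisfies $\textup{Gal}(L/F)=H$, a $2$-group, and $L=F(A[2])$; moreover $C/F$ remains in the admissible class of Theorem \ref{reduction}, since its genus is unchanged and, in the genus $3$ case, the given $K$-rational point is a fortiori $F$-rational. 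As Conjecture \ref{loceqroot} is assumed, Theorem \ref{reduction} therefore supplies the $2$-parity conjecture for $A$ over every such $F$.

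Next I would propagate finiteness of $\sha$. For any intermediate field $K\subseteq F\subseteq L$ the restriction map $\sha(A/F)\to\sha(A/L)$ has kernel contained in $H^1(\textup{Gal}(L/F),A(L))$, which is finite because $\textup{Gal}(L/F)$ is finite and $A(L)$ is finitely generated. Hence finiteness of $\sha(A/L)$ forces $\sha(A/F)$ finite for all such $F$; in particular $\delta_2(A/F)=0$ and $\rk_2(A/F)=\rk(A/F)$ over each of these fields, and over $K$ itself.

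With these in place I would invoke Theorem \ref{2group}. Because $C$ is semistable, so is $A=\Jac C$, which puts $A/K$ in the scope of \cite[Thm.~B.1]{dokchitser2020parity}; the theorem assembles the family of $2$-parity statements over the $2$-group fields $F=L^{H}$ into the parity statement over $K$, the Brauer-relation and regulator-constant bookkeeping in $G$ being controlled by the semistability of $A$ at the finitely many bad places together with the finiteness of $\sha$ established above. Combined with $\rk_2(A/K)=\rk(A/K)$, this yields $(-1)^{\rk(A/K)}=\prod_v w_{A/K_v}$, i.e.\ the parity conjecture for $C/K$.

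The principal obstacle is the middle stage: verifying that $A$ (and the underlying Prym datum furnished by the double cover) satisfies every hypothesis of Theorem \ref{2group} uniformly across the lattice of fields $L^{H}$, and confirming that semistable reduction is exactly the input that makes the descent from the $2$-group fields down to $K$ parity-neutral at the bad places. By contrast the outer stages are essentially formal: the admissible-class condition and the finiteness of $\sha$ both propagate along $L/K$ by the elementary arguments above, once Theorem \ref{reduction} and Theorem \ref{2group} are taken as given.
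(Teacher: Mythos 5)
Your proposal is correct and follows essentially the same route as the paper: Theorem \ref{reduction} supplies the $2$-parity conjecture over each $L^{H}$ with $H$ a $2$-group, finiteness of $\sha$ (propagated down from $L$ via the finite kernel of restriction) upgrades this to the parity conjecture over those fields, and Theorem \ref{2group} then descends to $K$. The extra detail you supply --- checking $L=F(A[2])$, that the rational point survives base change, and that $\sha$-finiteness descends to the intermediate fields --- is exactly what the paper's terser proof leaves implicit, and your worry about the Prym datum entering Theorem \ref{2group} is unfounded since that theorem only needs a semistable principally polarized abelian variety.
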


\begin{remark}
Local formulae, which express rank parities in terms of local data (as in as in Theorem \ref{easylocform}/\ref{localformula}), have been given elsewhere. Kramer and Tunnell have given one in the setting of quadratic twists \cite[Ch.~3]{kramertunnell}, whilst ideas of a local formula are clear in \cite[App.]{rootnumbersnonab}, where Fisher gives a parity result for elliptic curves with a $p$-isogeny. Dokchitser and Maistret \cite[Thm.~1.8]{dokchitser2020parity} give a local formula  for Jacobians that admit an isogeny $\phi$ satisfying $\phi \phi^t = [2]$. The existence of such isogenies relies on controlling a maximal isotropic subgroup of $\Jac C[2]$. We contrast this directly with our result, which requires a $K$-rational double cover.
\end{remark}

\begin{remark}
This article fulfils the first step (of the two-step approach described above), but we emphasise that no attempt at the second step of proving (or even conjecturing) an error term for our local formula is made here. Indeed, there appear to be two barriers to such a formulation. First, as noted in \cite[Rk.~1.18]{dokchitser2020parity}, where local formulae have been used to prove instances of the parity conjecture there have been no conceptual interpretations of the found error terms. Such an interpretation would likely aid finding them in higher genus cases. Second, these error terms have so far manifested as Hilbert symbols (so that the triviality of their product over all places is immediate). The number of terms involved for the abelian surface case is already substantial \cite[Defn.~1.13]{dokchitser2020parity}, and given the degrees involved with curves of genus $3$ (see, for example, the Dixmier--Ohno invariants \cite{dixmier, ohno}), we suspect that any prospective error term is likely to be unmanageable.
\end{remark}

\subsection{Outline}

This article is organised as follows. We start by recalling the theory of Prym varieties, which are central to our approach. We then develop the local formula for curves $C/K$ of genus $2, 3$ with an unramified double cover. In Section \ref{applications} we show that the parity conjecture for semistable curves of these genera can be reduced to the finiteness of the Tate--Shafarevich group and Conjecture \ref{loceqroot}. In the final two sections we give methods to compute terms in the local formula, and exhibit an explicit example where the $2^{\infty}$-Selmer rank parity is computed.

\subsection{Notation} $K$ will be a number field, $v$ a place of $K$, and $C$ a curve of genus $g$. $\Jac C$ is the Jacobian of $C$, and $\epsilon$ is a (non-trivial) $K$-rational $2$-torsion point of $\Jac C$. 

If $\phi \colon A \to B$ is a $K$-isogeny of abelian varieties, $\phi_v = \phi|_{K_v} \colon A(K_v) \to B(K_v)$ both denote the induced map on local points.

If $K_v$ is a finite extension of $\Q_p$ for some $p$, then $c_{A,v}$ is the Tamagawa number of $A$ at $v$.

A variety $X$ over $\R$ will have $n_{X,\R}$ real components (in the Euclidean topology). $A(\R)^0$ is the real component of $A$ containing the identity. We write $\phi |^0_\R$ for the induced map on the identity component $A(\R)^0 \to B(\R)^0$.

\subsection*{Acknowledgements} I would like to extend my greatest appreciation and thanks to my supervisor Vladimir Dokchitser, for his constant advice and support. Particular acknowledgement must also be given to Holly Green, Omri Faraggi and Raymond van Bommel; without their assistance this work could not have been accomplished, and all have my heartfelt thanks. Furthermore I would like to express my gratitude towards Dominik Bullach and Nils Bruin, conversations with both of whom were of immense help.

This work was supported by the Engineering and Physical Sciences Research Council [EP/L015234/1], the EPSRC Centre for Doctoral Training in Geometry and Number Theory (The London School of Geometry and Number Theory) at University College London.

\section{The Prym Construction}
\label{prymsection}

Let $C$ be a curve over a number field $K$ of genus $g = 2$ or $3$, and, if $g = 3$, suppose that $C(K) \neq \emptyset$. It is well-known that $\epsilon \in \Jac C(K)[2]$ induces an unramified double cover $\pi \colon D \to C$ over $K$, defined up to $K^*/(K^*)^2$, with $g_D = 2g-1$. $D$ inherits a natural involution $\iota \colon D \to D$ which exchanges the sheets above $C$.

We now summarise the standard theory of Prym varieties, with more detailed treatments in \cite{MumfordPryms} and \cite{prymtheory}. Consider the induced map on Jacobians $\pi_{*} \colon \Jac D \to \Jac C$. The \textit{Prym variety} $\prym(D/C)$ of the double cover $\pi$ is the connected component of $\ker \pi_{*}$ (considered as an algebraic group)  containing $0$. We may also write $\prym(\epsilon)$. We will also write $\prym(D/C)$ without introducing $D$ explicitly. Note the equivalent characterisations
$$
\prym(D/C) = \ker (\textup{id} + \iota^*)^0  = \im (\textup{id} - \iota^*),
$$
where the superscript $0$ denotes the Zariski-connected component of the identity. There is a $K$-isogeny (the \textit{Prym isogeny})
$$
\begin{aligned}
\phi \colon \, \Jac C \times \prym(D/C) &\to \Jac D, \\
(x, y) &\mapsto \pi^{*}x + y.
\end{aligned}
$$
The dual isogeny $\phi^t(x) = (\pi_{*}x, x - \iota^{*} x)$ satisfies $\phi^t \circ \phi = [2] = \phi \circ \phi^t$. 

$\prym(D/C)$ is a principally polarized abelian variety of dimension $g-1$, so for $g = 2, 3$ is either a Jacobian, a product of Jacobians, or the Weil restriction of an elliptic curve. A complete description of $\prym(D/C)$ in genus $3$ has been given by Bruin \cite[Thm.~5.1]{BruinGenus3}, and is reproduced in Table \ref{prymtable} (note in Case II.c, $\mathfrak{R}$ denotes the Weil restriction, and in case III.d the $Q_i$ are considered both as conics and as symmetric $3 \times 3$ matrices). 

\begin{table}[h!]
\centering
\resizebox{\textwidth}{!}{%
  \begin{tabular}{|c|c|c|c|c|}
\hline
Case  & Genus of $C$                       & $C$                                                                                      & $D$                                                                                          & $\prym(D/C)$                                                   \\ \hline
II   & 2                                  & $y^2 = f(x)g(x), \, \deg f = 4, \deg g = 2$                                              & $u^2 = f(x), v^2 = g(x)$                                                                     & $\Jac ( y^2 = f(x) ) $                                         \\ \hline
III.a & \multirow{3}{*}{3 (Hyperelliptic)} & $y^2 = f(x)g(x), \, \deg f = 6, \deg g = 2$                                              & $u^2 = f(x), v^2 = g(x)$                                                                     & $\Jac ( y^2 = f(x) ) $                                         \\ \cline{1-1} \cline{3-5} 
III.b &                                    & $y^2 = f(x)g(x), \, \deg f = 4, \deg g = 4$                                              & $u^2 = f(x), v^2 = g(x)$                                                                     & $\Jac (y^2 = f(x) ) \times \Jac (y^2 = g(x) )$                 \\ \cline{1-1} \cline{3-5} 
III.c &                                    & $y^2 = N_{K(\sqrt{d})[x]/K[x]}R(x), \, \deg R = 4$                                       & $(y_0 + y_1 \sqrt{d})^2 = R(x)$                                                              & $\mathfrak{R}_{K(\sqrt{d})/K}\left( \Jac(y^2 = R(x) ) \right)$ \\ \hline
III.d & 3 (Non-hyperelliptic)              & \parbox{6cm}{\centering $Q_1(x,y,z)Q_3(x,y,z) - Q_2(x,y,z)^2 = 0$, \\ $Q_i$ conics} & $\begin{cases} Q_1(u,v,w)  & = r^2 \\ Q_2(u,v,w)  & = rs \\ Q_3(u,v,w)  & = s^2 \end{cases}$ & \parbox{5cm}{\centering $\Jac F,$ \\ $F \colon y^2 = - \det(Q_1 + 2x Q_2 + x^2Q_3$)}                         \\ \hline
\end{tabular}%
}
\caption{Description of Prym varieties in dimension 2 and 3}
\label{prymtable}
\end{table}

\begin{remark}
The Prym construction can also be carried out for elliptic curves. In this case the Prym variety is a point, and one recovers the standard $2$-isogeny for an elliptic curve with a $2$-torsion point. The local formula in Theorem \ref{easylocform} is also seen to agree with \cite[p.~663]{cyclicpc}.
\end{remark}

\section{Derivation of the Local Formula}
\label{derivation}

\begin{definition}
Let $\mathcal{K}$ be a local field. Recall that a curve $X/\mathcal{K}$ of genus $g$ is said to be \textit{deficient} if $\Pic^{g-1}(X_\mathcal{K}) = \emptyset$. We define
$$
\mu_{\mathcal{K},X} = 
\begin{cases}
-1 & \textup{if $X$ is deficient} \\
1 & \textup{otherwise}.
\end{cases}
$$
Note that curves possessing a $\mathcal{K}$-rational point are never deficient. When $K$ is a number field with place $v$, we define $\mu_{v,X} = \mu_{K_v, X}$.
\end{definition}

\subsection{The Local Formula}

In this section we give a precise formulation of the local formula for $2^\infty$-Selmer rank parities.

\begin{definition}
Let $C/K$ be a curve over a number field, with an unramified double cover $\pi \colon D \to C$ and associated Prym variety $\prym(D/C)$. For any place $v$ of $K$, let
$$
\delta_{v,\prym(D/C)} =
\begin{cases}
\mu_{v,F} &\textup{if $\prym(D/C) = \Jac(F)$} \\
\mu_{v,E} \, \mu_{v,E'} &\textup{if $\prym(D/C) = E \times E'$ is a product of elliptic curves}\\
\prod_{w | v} \mu_{w, E} &\textup{if $\prym(D/C) = \mathfrak{R}_{K(\sqrt{d})/K}(E)$}, \, w \in M_{K(\sqrt{d})}.
\end{cases}
$$
\end{definition}

Note that $\delta_{v,\prym(D/C)}$ is always defined when $C$ has genus $2$ or $3$.

\begin{definition}
\label{lambdadef}
Let $C / \mathcal{K}$ be a curve of genus $2$ or $3$ over a local field, with an unramified double cover $\pi \colon D \to C$ and Prym variety $\prym(D/C)$. Let $\phi|_\mathcal{K} \colon \Jac C (\mathcal{K}) \times \prym(D/C) (\mathcal{K}) \to \Jac D (\mathcal{K})$ be the local map on points. Define
$$
\lambda_{C/\mathcal{K}, \phi} = \delta_{v,\prym(D/C)} \frac{\mu_{v,C}}{\mu_{v,D}} \, (-1)^{\dim_{\F_2} \left(  \frac{\coker{\phi |_\mathcal{K}}}{\ker \phi |_\mathcal{K}}\right)}
$$
\end{definition}

\begin{theorem}
\label{localformula}
Let $C / K$ be a curve of genus $2$ or $3$ over a number field, with an unramified double cover $D$ and Prym variety $\prym(D/C)$. Let $\phi \colon \Jac C \times \prym(D/C) \to \Jac D$ be the Prym isogeny. Then
$$
(-1)^{\rk_2 \Jac C + \rk_2 \prym(D/C)} = \prod_v \lambda_{C/K_v, \phi_v}.
$$
\end{theorem}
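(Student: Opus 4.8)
The plan is to recast the statement as a single parity computation for $\rk_2 A$, where $A = \Jac C \times \prym(D/C)$, and then run an isogeny descent along the Prym isogeny $\phi \colon A \to B := \Jac D$. Since corank of $\Sel_{2^\infty}$ is additive over products, $\rk_2 A = \rk_2 \Jac C + \rk_2 \prym(D/C)$, so it suffices to show $(-1)^{\rk_2 A} = \prod_v \lambda_{C/K_v,\phi_v}$. I would exploit the relation $\phi^t \circ \phi = [2]_A$ throughout: it forces $A[\phi] \subseteq A[2]$ and $B[\phi^t] \subseteq B[2]$, so both kernels are $\F_2$-vector spaces and $A[\phi]$, $B[\phi^t]$ are Cartier dual, which is exactly the input needed for the local/global duality machinery.

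First I would reduce the parity of $\rk_2 A$ to the $\phi$- and $\phi^t$-Selmer groups. From the exact sequence of Galois modules $0 \to A[\phi] \to A[2] \xrightarrow{\phi} B[\phi^t] \to 0$ and the associated descent diagram, one expresses $\dim_{\F_2} \Sel_2(A)$ in terms of $\dim_{\F_2}\Sel_\phi(A)$ and $\dim_{\F_2}\Sel_{\phi^t}(B)$ up to explicit torsion corrections. I would then invoke the self-duality of $\sha$: since $A$ is principally polarized, the Cassels--Tate pairing on $\sha(A)$ modulo its divisible part is alternating and non-degenerate, so that part has square order and contributes evenly. This yields $\dim_{\F_2}\Sel_2(A) \equiv \rk_2 A + \dim_{\F_2} A(K)[2] \pmod 2$, hence a congruence of the shape $\rk_2 A \equiv \dim_{\F_2}\Sel_\phi(A) + \dim_{\F_2}\Sel_{\phi^t}(B) + (\text{global torsion}) \pmod 2$.

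Next I would apply the Greenberg--Wiles (Cassels) comparison formula to the Cartier-dual pair $A[\phi]$, $B[\phi^t]$ with the local conditions cut out by local points. This gives
$$
\dim_{\F_2}\Sel_\phi(A) - \dim_{\F_2}\Sel_{\phi^t}(B) = \dim_{\F_2} A(K)[\phi] - \dim_{\F_2} B(K)[\phi^t] + \sum_v \dim_{\F_2}\frac{\coker \phi_v}{\ker \phi_v},
$$
since at each $v$ the local condition has size $\#\coker\phi_v$ while $H^0(K_v, A[\phi]) = \ker\phi_v$. Combining with the previous step, the parity of $\rk_2 A$ equals $\sum_v \dim_{\F_2}(\coker\phi_v/\ker\phi_v)$ up to a global sign assembled from the torsion terms $A(K)[\phi]$, $B(K)[\phi^t]$, $A(K)[2]$ together with the archimedean normalisations.

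The final, and hardest, step is to show that this residual global sign is exactly $\prod_v \delta_{v,\prym(D/C)}\, \mu_{v,C}/\mu_{v,D}$, i.e. that it splits as a product of purely local deficiency factors. Here I would compare the algebraic-group-theoretic local terms (via $H^1$ and Tamagawa numbers) that appear in a careful form of Greenberg--Wiles with the point-theoretic cokernels $\coker\phi_v$; the discrepancy is governed by whether $\Pic^{g-1}$ of each curve acquires a local point, which is precisely the deficiency $\mu_{v,X}$, entering through the autoduality identifications $\Jac X \cong \widehat{\Jac X}$ used to define $\phi^t$. Bruin's classification (Table \ref{prymtable}) then pins down $\prym(D/C)$ case by case, producing $\delta_{v,\prym(D/C)}$ as the appropriate deficiency ($\mu_{v,F}$, $\mu_{v,E}\mu_{v,E'}$, or $\prod_{w|v}\mu_{w,E}$) in the Jacobian, product-of-elliptic-curves, and Weil-restriction cases. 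I expect the main obstacles to be (i) controlling this deficiency discrepancy uniformly, including at places of bad reduction, and (ii) the archimedean places, where one must work with the identity components $A(\R)^0$, the real component counts $n_{X,\R}$, and the restricted map $\phi|^0_\R$ to isolate the correct local contribution; together these are what guarantee that each factor is genuinely local and that the global torsion correction reassembles into the stated product.
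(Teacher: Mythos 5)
Your overall architecture (isogeny descent along $\phi$, a Greenberg--Wiles comparison of the $\phi$- and $\phi^t$-Selmer groups, and a conversion of a residual global term into local factors) is close in spirit to the paper's proof, which packages the first two steps into a single appeal to the invariance of the BSD quotient modulo squares. However, there is a genuine error at the pivotal step. You assert that because $A$ is principally polarized, the Cassels--Tate pairing on $\sha(A)$ modulo its divisible part is alternating, so that $\sha^{\textup{nd}}_{A/K}[2^\infty]$ has square order and ``contributes evenly.'' This is false, and false in exactly the way that matters here: Poonen and Stoll showed that for a Jacobian the pairing is in general only antisymmetric, and that $|\sha^{\textup{nd}}[2^\infty]|$ is a square times $2^{\#\{v \,:\, X \text{ deficient at } v\}}$. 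The factors $\delta_{v,\prym(D/C)}\,\mu_{v,C}/\mu_{v,D}$ in $\lambda_{C/K_v,\phi_v}$ are precisely the local reformulation of this non-square part for $\Jac C$, $\prym(D/C)$ and $\Jac D$; your alternating-pairing claim erases them, and your final paragraph then tries to recover them as a ``discrepancy'' inside the Greenberg--Wiles local terms, which is not where they live. The correct mechanism is global-to-local: the quantity $\ord_2\bigl(|\sha^{\textup{nd}}_{(\Jac C \times \prym(D/C))/K}[2^\infty]|\,/\,|\sha^{\textup{nd}}_{\Jac D/K}[2^\infty]|\bigr)$ survives intact in the parity computation and is rewritten as a sum over $v$ of deficiency indicators by the Poonen--Stoll theorem, applied separately to each factor.

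Relatedly, your proposal never explains the restriction to genus $2$ and $3$, and it is needed exactly here: Poonen--Stoll's criterion applies to Jacobians, so one must know that $\prym(D/C)$ is a Jacobian, a product of Jacobians/elliptic curves, or a Weil restriction of an elliptic curve (Bruin's classification) in order to define and evaluate $\delta_{v,\prym(D/C)}$. Once the alternating-pairing claim is replaced by the Poonen--Stoll computation of $\ord_2|\sha^{\textup{nd}}[2^\infty]| \bmod 2$ for each of $\Jac C$, $\prym(D/C)$ and $\Jac D$, the remainder of your outline goes through and essentially reproduces the paper's argument.
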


\begin{proof}
For notational ease write $P = \prym(D/C)$. This proof follows the same method as \cite[Thm.~3.2]{dokchitser2020parity}, the only point of difference is in controlling the size of $\sha^{\textup{nd}}_{(\Jac C \times P) / K} [2^\infty]  \simeq \sha^{\textup{nd}}_{\Jac C/ K}[2^\infty] \times \sha^{\textup{nd}}_{P / K}[2^\infty]$. 

As in \cite[Thm.~4.3]{BSDModSquares},
$$
\begin{aligned}
2^{\rk_2 \Jac C + \rk_2 P} &= \square \cdot  \left( \prod_v \frac{|\coker \phi|_{K_v}|}{|\ker \phi|_{K_v}|} \right) \cdot
\frac{|\sha^{\textup{nd}}_{(\Jac C \times P) / K} [2^\infty] |}{|\sha^{\textup{nd}}_{(\Jac D) / K} [2^\infty] |} \\
\end{aligned}
$$
and hence
\begin{equation}
\label{eqn:2rkparity}
\rk_2 \Jac C + \rk_2 P = \ord_2 \left( \prod_v \frac{|\coker \phi|_{K_v}|}{|\ker \phi|_{K_v}|} \right) + \ord_2 \left(
\frac{|\sha^{\textup{nd}}_{(\Jac C \times P) / K} [2^\infty] |}{|\sha^{\textup{nd}}_{(\Jac D) / K} [2^\infty] |} \right) \mod{2}
\end{equation}

The second term can be understood in terms of $\mu_{v,C}, \mu_{v,D}$, and $\delta_{v,P}$ using the theorem of Poonen and Stoll \cite[Thm.~8, Cor.~12]{poonenandstoll}. Indeed,
$$
|\sha^{\textup{nd}}_{\Jac C/ K}[2^\infty]| = \square \cdot 2^{| \{ v \, | \, \mu_{v,C} \neq 1 \} |}, \quad |\sha^{\textup{nd}}_{\Jac D/ K}[2^\infty]| = \square \cdot 2^{| \{ v \, | \, \mu_{v,D} \neq 1 \} |}
$$
and
$$
| \sha^{\textup{nd}}_{P / K}[2^\infty] | = \square \cdot 2^{| \{ v \, | \, \delta_v \neq 1 \} |}
$$
In particular
$$
\ord_2 \left(  \sha^{\textup{nd}}_{\Jac C / K}[2^\infty] \right)  = | \{ v \, \mid \, \mu_{v,C} \neq 1 \}| \mod 2
$$
Note then that
$$
(-1)^{\ord_2 (  \sha^{\textup{nd}}_{\Jac C / K} [2^\infty] )} = \prod_{v} \mu_{v,C},
$$
and similarly for $P$ and $\Jac D$.
Combining into \ref{eqn:2rkparity}, we have
$$
(-1)^{\rk_2 \Jac C + \rk_2 P} = \prod_v (-1)^{\ord_2 \left(\frac{|\coker \phi|_{K_v}|}{|\ker \phi|_{K_v}|} \right) } \cdot \delta_{v,P}
\frac{\mu_{v,C}}{\mu_{v,D}} = \prod_v \lambda_{C/K_v,\phi_v}$$
as claimed.
\end{proof}

\begin{remark}
The only place where the restriction $g \in \{ 2, 3 \}$ was used was to ensure that $\prym(D/C)$ was amenable to the criterion given in \cite{poonenandstoll}. Interest in extending this result to general Prym varieties was raised immediately by Poonen and Stoll \cite[p.~1126]{poonenandstoll}, and such an extension (perhaps even one covering all principally polarized abelian varieties) would generalize $\delta_{v,\prym(D/C)}$, allowing a local formula in all genera. In turn this would give control over the $2^{\infty}$-Selmer rank parities of general curves (of arbitrary genus) admitting rational double covers.
\end{remark}

\begin{remark}
For curves of genus 2 (Case II of Table \ref{prymtable}), the existence of a rational 2-torsion point (and hence double cover) is equivalent to $\textup{Gal}(fg) \leqslant C_2 \times S_4$. In contrast, \cite[Thm.~1.8]{dokchitser2020parity} gives a local formula in the case that the Galois group is contained in $C_2 \wr S_3$.
\end{remark}

\section{Applications to the Parity Conjecture}
\label{applications}

Here we show how the Prym construction, under mild assumptions, can be iterated to reduce the Parity Conjecture for Jacobians in genus $2$ and $3$ to Conjecture \ref{loceqroot} and the finiteness of the Tate--Shafarevich group.

\begin{lemma}
\label{2tors}
Let $C$ be a curve of genus $g$, with double cover $\pi \colon D \to C$ and Prym variety $\prym(D/C)$. Then $\prym(D/C)[2] \subset \pi^{*}(\Jac C[2])$. In particular, if $C$ has full $2$-torsion over a field $K$, then $\prym(D/C)$ has full $2$-torsion over $K$ also. 
\end{lemma}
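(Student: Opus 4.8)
The plan is to determine $\ker\phi$ explicitly on $2$-torsion and then conclude by an order count, rather than to identify $P[2]$ with the $\iota^*$-fixed locus directly---the latter fails because $\ker(\mathrm{id}-\iota^*)$ need not be connected, so not every $\iota^*$-fixed $2$-torsion point of $\Jac D$ lies in $\pi^*(\Jac C)$. Throughout write $P=\prym(D/C)$ and recall from Section~\ref{prymsection} the containments $P\subseteq\ker(\mathrm{id}+\iota^*)$ and $P\subseteq\ker\pi_*$ (as $P$ is the identity component of each), together with the standard cover identities $\iota^*\pi^*=\pi^*$, $\pi_*\pi^*=[2]$ and $\pi^*\pi_*=\mathrm{id}+\iota^*$, and the fact that $\ker\pi^*=\langle\epsilon\rangle$ has order $2$.

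First I would compute the kernel of the Prym isogeny. Since $\phi(x,y)=\pi^*x+y$, we have $\ker\phi=\{(x,-\pi^*x):\pi^*x\in P\}$, so the first projection identifies $\ker\phi$ with $H:=\{x\in\Jac C:\pi^*x\in P\}$. For $x\in H$ the point $\pi^*x$ lies in both $\ker(\mathrm{id}-\iota^*)$ (as $\iota^*\pi^*=\pi^*$) and $\ker(\mathrm{id}+\iota^*)$ (as $\pi^*x\in P$), so $2\pi^*x=0$; applying $\pi_*$ and using $\pi_*\pi^*=[2]$ together with $P\subseteq\ker\pi_*$ gives $2x=0$. Hence $H\subseteq\Jac C[2]$, and in particular $\ker\phi\subseteq(\Jac C\times P)[2]$.

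Next I count orders. From $\phi^t\circ\phi=[2]$ on the $(2g-1)$-dimensional abelian variety $\Jac C\times P$ and the equality $\deg\phi=\deg\phi^t$ of degrees of dual isogenies, we get $\deg\phi=2^{2g-1}$, whence $|H|=|\ker\phi|=2^{2g-1}$. Since $\pi^*\epsilon=0\in P$ we have $\epsilon\in H$, so $\ker(\pi^*|_H)=\langle\epsilon\rangle$ and $|\pi^*(H)|=2^{2g-2}$. On the other hand $\pi^*(H)\subseteq P[2]$, its elements being $2$-torsion points lying in $P$, and $|P[2]|=2^{2\dim P}=2^{2g-2}$. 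Comparing orders forces $\pi^*(H)=P[2]$, and therefore $P[2]=\pi^*(H)\subseteq\pi^*(\Jac C[2])$, which is the assertion. The final claim is then immediate: $\pi^*$ is defined over $K$, so if $\Jac C[2]\subseteq\Jac C(K)$ then $H\subseteq\Jac C(K)$ and $P[2]=\pi^*(H)\subseteq\Jac D(K)$ consists of $K$-rational points of $P$.

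The main obstacle is ensuring the order count is watertight: one must justify $\deg\phi=2^{2g-1}$ (using $\deg[2]=2^{2(2g-1)}$ on $\Jac C\times P$ and $\deg\phi=\deg\phi^t$) and that $\ker\pi^*$ has order exactly $2$, since it is precisely the equality $|\pi^*(H)|=|P[2]|$---not merely the inclusion $\pi^*(H)\subseteq P[2]$---that delivers the containment in the required direction. Beyond this the argument is formal, flowing entirely from the cover identities and the two defining containments for $P$; no deficiency or subtler field-of-definition issues intervene.
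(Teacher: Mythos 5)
Your argument is correct, but it runs in the opposite direction to the paper's. The paper works with the dual isogeny $\phi^t$ on $\Jac D$: it checks that both $\prym(D/C)[2]$ and $\pi^{*}(\Jac C[2])$ lie in $\ker(\phi^{t})$, and then equates $\ker(\phi^{t})$ with $\pi^{*}(\Jac C[2])$ by noting both have order $2^{2g-1}$ (the former since $\phi\circ\phi^{t}=[2]$ on the $(2g-1)$-dimensional $\Jac D$, the latter since $\ker\pi^{*}=\{0,\epsilon\}$); the desired inclusion falls out. You instead compute $\ker\phi$ on the source, identify it via the first projection with $H=\{x\in\Jac C : \pi^{*}x\in\prym(D/C)\}\subseteq\Jac C[2]$, and compare $|\pi^{*}(H)|=2^{2g-2}$ with $|\prym(D/C)[2]|=2^{2g-2}$ to force $\prym(D/C)[2]=\pi^{*}(H)\subseteq\pi^{*}(\Jac C[2])$. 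The ingredients are identical ($\deg\phi=\deg\phi^{t}=2^{2g-1}$, the characterisations of the Prym as identity components of $\ker\pi_{*}$ and $\ker(\textup{id}+\iota^{*})$, and $|\ker\pi^{*}|=2$), just deployed on opposite sides of the isogeny, and each step of yours checks out. One incidental dividend of your route: your description of $\ker\phi$ as $H$ is precisely the content of the paper's later Lemma~\ref{kernel} (the sign in $(x,-\pi^{*}x)$ being immaterial on $2$-torsion), so you in effect establish that lemma first and deduce this one from it, whereas the paper proves the present lemma first and then uses it to prove Lemma~\ref{kernel}. Your insistence on justifying $\deg\phi=2^{2g-1}$ and on distinguishing the inclusion $\pi^{*}(H)\subseteq\prym(D/C)[2]$ from the equality is warranted; the paper leaves both points implicit.
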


\begin{proof}
Recall the characterisations of $\prym(D/C)$ given in Section \ref{prymsection}. Note that $\prym(D/C)[2] \subset \ker (\phi^t)$. Indeed, if $x \in \prym(D/C)[2]$, then $\phi^{t}(x) = (\pi_{*}x,x - \iota^{*}x) = (0,x + \iota^{*}x) = (0,0)$. However we claim that $\ker(\phi^{t}) = \pi^{*}(\Jac C[2])$. First observe that if $\theta \in \Jac C[2]$, then $\phi^{t}(\pi^{*}\theta) = (\pi_{*} \pi^{*} (\theta), \pi^{*}(\theta) - \iota^{*} \pi^{*}(\theta)) = (2\theta, 0) = (0,0)$. Second, as $\ker \pi^{*} = \{0, \epsilon\}$, both sets have size $2^{2g-1}$.
\end{proof}

\begin{theorem}
\label{reduction}
Let $K$ be a number field, and let $C/K$ be either a curve of genus $2$, or a curve of genus $3$ with a $K$-rational point. Suppose $L = K(\Jac C[2])$ is such that $G = \textup{Gal}(L/K)$ is a $2$-group. Then if Conjecture \ref{loceqroot} holds, the $2$-parity conjecture holds for $C/K$.
\end{theorem}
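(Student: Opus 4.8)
The plan is to prove the theorem by descent through the Prym tower, inducting on dimension until reaching elliptic curves. Write $w(A) = \prod_v w_{A/K_v}$ for the global root number, so that $P(A)$, the assertion that the $2$-parity conjecture holds for $A$, reads $(-1)^{\rk_2 A} = w(A)$. Because $\rk_2$ is additive and $w$ is multiplicative over products, the quantity $(-1)^{\rk_2 A}\, w(A) \in \{\pm 1\}$ is multiplicative in $A$; hence among the three statements $P(A)$, $P(B)$ and $P(A \times B)$, any two imply the third. This is the formal engine that lets us trade $P(\Jac C)$ for the parity of the Prym.

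First I would manufacture a rational $2$-torsion point to start the construction. Since $G = \textup{Gal}(L/K)$ is a $2$-group acting on the set $\Jac C[2] \setminus \{0\}$, which has odd cardinality $2^{2g} - 1$, the fixed-point congruence for $2$-groups (the number of fixed points is congruent to the size of the set mod $2$) produces a nonzero $\epsilon \in \Jac C(K)[2]$. This gives an unramified double cover $\pi \colon D \to C$ over $K$ and the Prym isogeny $\phi \colon \Jac C \times \prym(\epsilon) \to \Jac D$. Combining Theorem \ref{localformula} with Conjecture \ref{loceqroot} (using $w^{-1} = w$ for root numbers) yields $(-1)^{\rk_2 \Jac C + \rk_2 \prym(\epsilon)} = w(\Jac C)\, w(\prym(\epsilon))$, i.e. $P(\Jac C \times \prym(\epsilon))$; by the multiplicativity above this is exactly the equivalence $P(\Jac C) \iff P(\prym(\epsilon))$. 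It remains to prove $P(\prym(\epsilon))$, and for the descent to iterate I would check that the $2$-group hypothesis propagates: by Lemma \ref{2tors} we have $\prym(\epsilon)[2] \subset \pi^*(\Jac C[2])$, so $K(\prym(\epsilon)[2]) \subseteq L$ and $\textup{Gal}(K(\prym(\epsilon)[2])/K)$ is a quotient of $G$, again a $2$-group.

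Now $\prym(\epsilon)$ is a principally polarized abelian variety of dimension $g-1 \in \{1,2\}$, identified explicitly by Table \ref{prymtable}: in genus $2$ it is an elliptic curve, while in genus $3$ it is a Jacobian $\Jac F$ of a genus $2$ curve (Cases III.a, III.d), a product $E_1 \times E_2$ of elliptic curves (Case III.b), or a Weil restriction $\mathfrak{R}_{K(\sqrt d)/K}(E')$ (Case III.c). In the Jacobian cases I would apply the fixed-point argument and the Prym construction a second time to $F$ (legitimate, since the $2$-group property has propagated and $F$ has genus $2$), reducing to an elliptic curve; in the product case multiplicativity reduces $P(E_1 \times E_2)$ to $P(E_1)$ and $P(E_2)$ separately; and in the Weil restriction case the standard compatibilities of the $2$-Selmer rank (Shapiro's lemma) and of root numbers (inductivity) give $P(\mathfrak{R}_{K(\sqrt d)/K}(E')) \iff P(E'/K(\sqrt d))$. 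Every branch terminates in an elliptic curve whose full $2$-torsion field is a $2$-group extension of its field of definition, so, the Sylow-$2$ subgroup of $S_3$ having order $2$, that curve carries a rational $2$-torsion point. The base case is therefore $P(E)$ for such an $E$: here the Prym construction degenerates to the classical $2$-isogeny attached to $\epsilon$, with Prym variety a single point (the Remark in Section \ref{prymsection}), and Theorem \ref{localformula} together with the elliptic specialization of Conjecture \ref{loceqroot} reads $(-1)^{\rk_2 E} = w(E)$ directly, the point contributing trivially; equivalently one invokes the known $2$-parity conjecture for elliptic curves with a rational $2$-isogeny that this specialization recovers, cf. \cite{cyclicpc}. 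Feeding this back up the tower through the equivalences above gives $P(\Jac C)$.

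I expect the main obstacle to lie at the base of the tower rather than in the descent. One must verify that each terminal object genuinely is an elliptic curve carrying a rational $2$-torsion point \emph{and} having its $2$-torsion field a $2$-group extension, so that the base case applies uniformly; and in Case III.c one must pin down the Weil-restriction compatibilities for both $\rk_2$ and the root number and confirm that the quadratic field $K(\sqrt d)$ produced by the Prym decomposition is itself contained in $L$ (so that $E'$ inherits the $2$-group property over $K(\sqrt d)$). Once these checks and the elliptic base case are secured, the chain of equivalences $P(\Jac C) \iff P(\prym(\epsilon)) \iff \cdots \iff P(\textup{elliptic})$ composes formally and the theorem follows.
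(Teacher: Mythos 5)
Your proposal is correct and takes essentially the same route as the paper: produce a rational $2$-torsion point from the $2$-group fixed-point argument, use Theorem \ref{localformula} together with Conjecture \ref{loceqroot} to trade the $2$-parity statement for $\Jac C$ against that for $\prym(\epsilon)$, propagate the $2$-group hypothesis via Lemma \ref{2tors}, split into the Jacobian, product and Weil-restriction cases, and terminate in elliptic curves with a rational $2$-torsion point --- for which the paper invokes the known $2$-parity theorem \cite[Thm.~5.1]{notesonpc} rather than an ``elliptic specialization'' of Conjecture \ref{loceqroot} (which as stated covers only genus $2$ and $3$), matching the fallback you yourself offer. Your added care over the Weil-restriction compatibilities and over $K(\sqrt{d}) \subseteq L$ only makes explicit points the paper leaves implicit.
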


\begin{proof}
We show this when $C$ has genus $3$, the proof in genus $2$ being almost identical. 

Both $G$ and $\Jac C[2]$ are $2$-groups, so the action of $G$ has a non-trivial fixed point, i.e $C$ has a non-trivial $K$-rational $2$-torsion point, $\epsilon$. Let $P = \prym(\epsilon)$ be the corresponding Prym variety. Then $P$ has full $2$-torsion over $K$ by Lemma \ref{2tors}. Thus $\textup{Gal}(K(P[2])/K)$ is a quotient of $G$, hence is also a $2$-group. $P$ then has a $K$-rational $2$-torsion point $\epsilon'$.  We show that the $2$-parity conjecture holds for $\prym(D/C)/K$ by considering each of Cases III.a, III.b, III.c.

In Case III.a, $\prym(D/C) = \Jac F$ is a Jacobian. We can consider the Prym variety of the two-torsion point $\epsilon \in \Jac F$, $\prym(\epsilon') = E$, say. Then $E$ also has full $2$-torsion over $L$, and $\textup{Gal}(K(E[2])/K)$ is a $2$-group, so $E$ has a $2$-torsion point over $K$. The 2-parity conjecture holds for $E$ by \cite[Thm.~5.1]{notesonpc}, i.e. $ (-1)^{\rk_2 E} = \prod_v w_{E/K_v} $. By Conjecture \ref{loceqroot} and Theorem \ref{localformula},
$$
(-1)^{\rk_2 P + \rk_2 E} = \prod_v w_{(P \times E) /K_v} = \prod_v w_{P/K_v} \cdot w_{E/K_v}
$$
Together these give the $2$-parity conjecture for $P/K$. 

In Case III.b, $\prym(D/C) = E \times E'$ is a product of elliptic curves. Both have full 2-torsion over $L$, and so, as above, both have a non-trivial $2$-torsion point in $K$. The $2$-parity conjecture then holds for both over $K$, hence for $\prym(D/C)/K$ also.

In Case III.c, $\prym(D/C) = \mathfrak{R}_{K(\sqrt{d})/K}(E)$ is the Weil restriction of some elliptic curve. By the standard argument, $E$ has a $2$-torsion point over $K(\sqrt{d})$ so the $2$-parity conjecture holds for $E/K(\sqrt{d})$. This is preserved under Weil restriction, i.e. the $2$-parity conjecture holds for $\prym(D/C)/K$.

Conjecture \ref{loceqroot} and Theorem \ref{localformula} give 
$$
(-1)^{\rk_2 \Jac C + rk_2 P} = \prod_v w_{\Jac C/K_v} \cdot w_{P/K_v}
$$
As the $2$-parity conjecture holds for $P/K$, it must then hold for $C/K$.
\end{proof}

\begin{theorem}[{See \cite[Thm.~B.1]{dokchitser2020parity}}]
\label{2group}
Let $L/K$ be a Galois extension of number fields with Galois group $G$ and $A/K$ a principally polarized abelian variety. Suppose
\begin{itemize}
    \item $\sha(A/L)$ has finite $p$-primary part for every odd prime $p$ that divides $|G|$,
    \item $A/K$ is semistable
\end{itemize}
Then if the parity conjecture holds for $A/L^H$ for all $H \leqslant G$ of $2$-power order, it holds for $A/K$.
\end{theorem}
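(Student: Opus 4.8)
The plan is to recast both sides of the parity conjecture as invariants attached to Artin representations of $G$ and then to deduce the statement over $K = L^G$ from the statements over the fixed fields $L^H$ by a Brauer-style induction, in the spirit of the regulator-constant method of T.\ and V.\ Dokchitser. Two inductivity properties supply the dictionary. On the arithmetic side the Mordell--Weil rank is literally a multiplicity, $\rk(A/L^H) = \langle \mathrm{Ind}_H^G \mathbf 1,\ A(L) \otimes \Q \rangle$, by Galois descent; on the analytic side the global root number is inductive, so $\prod_v w_{A/(L^H)_v} = W(A/K, \mathrm{Ind}_H^G \mathbf 1)$. Thus the parity conjecture for $A/L^H$ is the identity $(-1)^{\langle \mathrm{Ind}_H^G \mathbf 1,\ A(L)\otimes\Q\rangle} = W(A/K, \mathrm{Ind}_H^G \mathbf 1)$, and the target is the same identity for $\mathbf 1 = \mathrm{Ind}_G^G \mathbf 1$.

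Artin's induction theorem does let me write $m\,\mathbf 1_G = \sum_H n_H\, \mathrm{Ind}_H^G \mathbf 1$ with $n_H \in \Z$ and $H$ cyclic, but the scalar $m$ is divisible by the bad primes (often even), so naively multiplying the per-$H$ identities yields nothing. The substance of the argument is the sharper assertion that the \emph{parity defect} itself is inductive. Working with the dual $p^\infty$-Selmer representation $\mathfrak X_p$ of $G$, which is self-dual because $A$ is principally polarised, one expresses, for a self-dual rational representation arising from a Brauer relation $\sum_i n_i\, \mathrm{Ind}_{H_i}^G \mathbf 1 = 0$, the product $\prod_i\big((-1)^{\langle \mathrm{Ind}_{H_i}^G \mathbf 1,\ \mathfrak X_p\rangle}\, W(A/K, \mathrm{Ind}_{H_i}^G \mathbf 1)\big)^{n_i}$ as a product of purely local regulator-constant terms, one at each place of bad or archimedean reduction. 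For \emph{semistable} $A$ these local terms can be computed explicitly and shown to multiply to $1$, so the defect vanishes and $p$-Selmer parity becomes compatible with every Brauer relation. This local computation is the step I expect to be the main obstacle, and it is exactly where semistability is indispensable.

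To pass between the $p$-Selmer invariant that the machinery controls and the Mordell--Weil rank in the parity conjecture I would use the hypothesis on $\sha$. Rank parity over a field $F$ agrees with $p$-Selmer parity precisely when the $p^\infty$-part of $\sha(A/F)$ has even corank, and finiteness of the odd $p$-primary part of $\sha(A/L)$ descends along the restriction maps $\sha(A/L^H) \to \sha(A/L)$ --- whose kernels are $|G|$-torsion --- to give $\sha(A/L^H)[p^\infty]$ finite, hence $\rk_p(A/L^H) = \rk(A/L^H)$, for every odd $p \mid |G|$ and every $H$ (including $H = G$). The prime $p = 2$ needs no such input, since the $2^\infty$-Selmer rank already absorbs the corank of $\sha[2^\infty]$; this is precisely why finiteness is imposed only at odd primes, and it is what lets me convert the given rank-parity statements over the $L^H$ into the Selmer-parity statements the machinery handles, and back over $K$.

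Finally I would assemble the pieces, which explains why subgroups of $2$-power order suffice. Fixing a Sylow $2$-subgroup $P \leqslant G$, the index $[G:P]$ is odd, and $\mathrm{Ind}_P^G \mathbf 1 = \mathbf 1 \oplus \rho_0$ with $\dim \rho_0 = [G:P]-1$ even; the regulator-constant compatibility above, applied to the self-dual $\rho_0$, forces $\langle \rho_0, \mathfrak X_p\rangle$ even and $W(A/K,\rho_0) = 1$, so parity over the $2$-power field $L^P$ descends to parity over $K$. Iterating within the $2$-group $P$ over its ($2$-power) subgroups, and running this for $p = 2$ directly and for a single odd $p \mid |G|$ using $\rk_p = \rk$ from the $\sha$-step, I obtain $(-1)^{\rk(A/K)} = \prod_v w_{A/K_v}$. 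Since only signs are multiplied and, once the defect has been shown inductive, the relevant scalars can be taken odd, the final bookkeeping is formal; the two non-formal inputs are the semistable local computation and the transfer of $\sha$-finiteness.
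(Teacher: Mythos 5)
First, a caveat on the comparison: the paper does not prove Theorem~\ref{2group} at all --- it is imported verbatim from \cite[Thm.~B.1]{dokchitser2020parity} and used as a black box --- so your proposal can only be measured against the strategy of that reference, which is indeed the Dokchitser--Dokchitser regulator-constant method you are gesturing at. Measured that way, the sketch has the right ingredients in view (Brauer relations, semistability for a local computation, finiteness of $\sha$ to pass between Mordell--Weil and Selmer parity, and the latter transfer step is fine), but the two load-bearing steps are both wrong as written. The ``parity defect'' you propose to control, namely $\prod_i\bigl((-1)^{\langle \mathrm{Ind}_{H_i}^G \mathbf 1,\,\mathcal X_p\rangle}\, W(A/K,\mathrm{Ind}_{H_i}^G\mathbf 1)\bigr)^{n_i}$ over a Brauer relation $\sum_i n_i\,\mathrm{Ind}_{H_i}^G\mathbf 1=0$, is \emph{identically} $1$: the exponent is $\sum_i n_i\dim \mathcal X_p^{H_i}=\langle 0,\mathcal X_p\rangle=0$ and the root-number product is $W(A/K,0)=1$, both directly from the relation. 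So this quantity carries no information and ``showing it is a product of local terms multiplying to $1$'' proves nothing. The actual content of the machinery is different: to each relation $\Theta$ one attaches a self-dual representation $\tau_\Theta$ (generally \emph{not} a combination of the $\mathrm{Ind}_{H_i}^G\mathbf 1$), the regulator constant $\mathcal C_\Theta(\mathcal X_p)$ computes $\langle \tau_\Theta,\mathcal X_p\rangle \bmod 2$ as a product of local Tamagawa-type terms, and the semistable computation matches that product with $W(A/K,\tau_\Theta)$. That is the non-tautological statement your sketch needs and does not isolate.

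The second, and fatal, error is in the assembly: you claim that for a Sylow $2$-subgroup $P$, writing $\mathrm{Ind}_P^G\mathbf 1=\mathbf 1\oplus\rho_0$, the machinery ``forces $\langle\rho_0,\mathcal X_p\rangle$ even and $W(A/K,\rho_0)=1$.'' This is false. Take $G=S_3$ and $P=C_2$: then $\rho_0$ is the standard $2$-dimensional irreducible, $\langle\rho_0,\mathcal X_p\rangle=\rk_p(A/L^{C_2})-\rk_p(A/K)$ is very often odd, and $W(A/K,\rho_0)$ is very often $-1$ --- this is precisely the phenomenon the regulator-constant papers are built to exploit, not to rule out. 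Regulator constants attach to Brauer relations (elements of the kernel of the Burnside ring map to $R_{\Q}(G)$), not to arbitrary self-dual virtual representations, so there is no ``compatibility'' to apply to $\rho_0$ directly. What you actually need is the twisted parity statement $(-1)^{\langle\rho_0,\mathcal X_p\rangle}=W(A/K,\rho_0)$, and proving it is essentially the whole theorem: one must express $\rho_0$, after Solomon/Brauer induction to quasi-elementary subgroups and after discarding pairs $\tau\oplus\tau^*$ with $\tau$ not self-dual (which contribute trivially to both sides by self-duality of $\mathcal X_p$ and of $A$), in terms of the representations $\tau_\Theta$ and of inductions from $2$-power-order subgroups where parity is assumed. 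None of that group-theoretic induction appears in your sketch, and the final remark about ``iterating within the $2$-group $P$'' is vacuous since $P$ itself has $2$-power order and is already covered by the hypothesis. As written, the argument either proves nothing (step one) or asserts a false intermediate claim (step two).
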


\begin{corollary}
\label{semistableparity}
Suppose Conjecture \ref{loceqroot} holds. Let $C/K$ be a semistable curve of genus $2$ or semistable curve of genus $3$ with a $K$-rational point, over a number field $K$. If $\sha(\Jac C/L)$ is finite (where $L$ is full the $2$-torsion field of $C/K$), then the Parity Conjecture holds for $C/K$.
\end{corollary}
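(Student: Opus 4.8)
The plan is to combine Theorem \ref{reduction}, which settles the parity conjecture when $\textup{Gal}(K(\Jac C[2])/K)$ is a $2$-group, with the descent result Theorem \ref{2group}, which allows one to pass from an arbitrary Galois group down to its $2$-power fixed fields. Write $A = \Jac C$, let $L = K(\Jac C[2])$ be the full $2$-torsion field, and set $G = \textup{Gal}(L/K)$.

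First I would verify the hypotheses of Theorem \ref{2group} applied to $A/K$ and the extension $L/K$. The Jacobian $A$ is principally polarized via the theta divisor, and since $C/K$ is semistable so is $A/K$. The assumption that $\sha(A/L)$ is finite guarantees in particular that its $p$-primary part is finite for every odd prime $p \mid |G|$. Thus Theorem \ref{2group} reduces the parity conjecture for $A/K$ to the parity conjecture for $A/M$, where $M = L^H$ ranges over the fixed fields of all subgroups $H \leqslant G$ of $2$-power order.

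Next, for each such $M$ I would apply Theorem \ref{reduction} to the base change $C_M = C \times_K M$. The crucial observation is that $M \subseteq L$ and $L$ already contains the full $2$-torsion of $A$, so $M(\Jac C[2]) = L$ and $\textup{Gal}(L/M) = H$ is a $2$-group by construction. Moreover $C_M$ has genus $2$ or $3$, and in the genus $3$ case the $K$-rational point of $C$ supplies an $M$-rational point, so the hypotheses of Theorem \ref{reduction} are met. Granting Conjecture \ref{loceqroot}, that theorem yields the $2$-parity conjecture for $C_M/M$, i.e. $(-1)^{\rk_2 (A/M)} = \prod_w w_{A/M_w}$ over the places $w$ of $M$.

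Finally I would upgrade this from $2$-parity to the full parity conjecture over each $M$. Since $M \subseteq L$ and $\sha(A/L)$ is finite, the kernel of the restriction map $\sha(A/M) \to \sha(A/L)$ is contained in $H^1(\textup{Gal}(L/M), A(L))$, which is finite because $A(L)$ is finitely generated by Mordell--Weil and $\textup{Gal}(L/M)$ is finite; hence $\sha(A/M)$ is finite and $\rk_2(A/M) = \rk(A/M)$. The $2$-parity statement above therefore becomes the full parity conjecture for $A/M$. With parity established over every $2$-power fixed field $M = L^H$, Theorem \ref{2group} delivers the parity conjecture for $A/K = \Jac C/K$, as required. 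The only genuinely delicate point is this last conversion: one must know that finiteness of $\sha$ propagates down to each intermediate field $M$, so that the local formula, which controls only $\rk_2$, actually computes the Mordell--Weil rank parity there.
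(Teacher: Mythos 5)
Your proposal is correct and follows essentially the same route as the paper: apply Theorem \ref{reduction} over each fixed field $L^H$ with $H$ a $2$-group, use finiteness of $\sha$ to convert $2$-parity into full parity there, and descend to $K$ via Theorem \ref{2group}. You in fact supply one detail the paper's proof leaves implicit, namely why finiteness of $\sha(\Jac C/L)$ implies finiteness of $\sha(\Jac C/L^H)$ via the inflation--restriction argument.
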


\begin{proof}
By Proposition \ref{reduction}, the $2$-parity conjecture holds for all subfields $L^H \subset L$ with $H$ a $2$-group. By the finiteness of $\sha(\Jac C/L^H)$, the parity conjecture holds for all such $C$, and so by Theorem \ref{2group}, the parity conjecture holds for $C/K$.
\end{proof}

\begin{remark}
In fact by Theorem \ref{2group} it suffices to assume only that $\sha(\Jac C/L)[p^{\infty}]$ is finite for all $p$ dividing the size of $\textup{Gal}(L/K) \subset \textup{Sp}_6(\F_2)$, i.e. for $p = 2, 3, 5$ and $7$, as the latter has size $1454120 = 8! \cdot 36$.
\end{remark}

\section{Explicit Methods for Computing Local Terms}

As previously noted, one expects to be able to evaluate a local formula at a given curve (and, in our case, choice of double cover). In this section we introduce methods to do this for the local formula derived in Section \ref{derivation}. First, we examine how the kernel/cokernel ratio can be understood in terms of more readily computable data (varying by place). Second, 
we give methods to compute this data explicitly (under mild assumptions on the curve $C$).

\subsection{Kernel/Cokernel Locally}

\begin{lemma}
\label{lockercoker}
Let $K$ be a number field with place $v$. Let $\phi \colon A \times B \to A'$ be a $2^g$-isogeny of abelian varieties (where $A \times B, A'$ have dimension $g$) over the local field $K_v$. Then
$$
\frac{|\ker \phi_v|}{|\coker \phi_v |} =
\begin{cases}
2^g & K_v \simeq \C \\
\frac{n_{A/\R} \, n_{B/\R}}{n_{A'/\R}} \, | \ker \phi|_{\R}^0 | & K_v \simeq \R \\
\frac{c_{A,v} \, c_{B,v} }{c_{A',v}} & \text{$K_v/\Q_p$ finite, $p$ odd} 
\end{cases}
$$
\end{lemma}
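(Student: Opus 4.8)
The plan is to treat all three cases by a single device: for $G \in \{A\times B,\, A'\}$ write $G(K_v)$ as an extension of a finite ``component'' quotient by a connected (or pro-$p$) subgroup on which $\phi$ restricts to an isomorphism, then read off the ratio from the snake lemma applied to the map of such extensions induced by $\phi$. Two elementary facts will be used repeatedly: for a homomorphism $f$ of finite abelian groups $\tfrac{|\ker f|}{|\coker f|}=\tfrac{|\text{source}|}{|\text{target}|}$, and both Tamagawa numbers and real component counts are multiplicative in products, so $c_{A\times B,v}=c_{A,v}\,c_{B,v}$ and $n_{(A\times B)/\R}=n_{A/\R}\,n_{B/\R}$. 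The final three expressions then follow by substituting these product formulae.

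For $K_v\simeq\C$ the groups $(A\times B)(\C)$ and $A'(\C)$ are connected divisible complex tori. Since $\C$ is algebraically closed the isogeny is surjective on $\C$-points, so $\coker\phi_v=0$, while $\ker\phi_v=(\ker\phi)(\C)$ has order $\deg\phi=2^g$. This gives the first line immediately. For $K_v\simeq\R$ I would apply the snake lemma to the map of short exact sequences $0\to G(\R)^0\to G(\R)\to\pi_0(G(\R))\to 0$ induced by $\phi$. The induced map on identity components $\phi|^0_\R\colon (A\times B)(\R)^0\to A'(\R)^0$ is a homomorphism of real tori of equal dimension with finite kernel, hence a covering and in particular surjective; so it has kernel $\ker\phi|^0_\R$ and trivial cokernel. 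The snake lemma then gives $\coker\phi_v\simeq\coker(\bar\phi)$ and a short exact sequence $0\to\ker\phi|^0_\R\to\ker\phi_v\to\ker(\bar\phi)\to 0$, whence $\tfrac{|\ker\phi_v|}{|\coker\phi_v|}=|\ker\phi|^0_\R|\cdot\tfrac{|\pi_0((A\times B)(\R))|}{|\pi_0(A'(\R))|}$, which is the stated expression after using $|\pi_0(G(\R))|=n_{G/\R}$.

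For $K_v/\Q_p$ finite with $p$ odd I would extend $\phi$ to a morphism of Néron models by the Néron mapping property, and apply the snake lemma to the reduction sequences $0\to\hat{\mathcal{G}}(\mathfrak{m})\to G(K_v)\to\tilde{\mathcal{G}}(\F_q)\to 0$, where $\hat{\mathcal{G}}(\mathfrak{m})$ is the (pro-$p$) kernel of reduction and $\tilde{\mathcal{G}}(\F_q)$ the finite group of $\F_q$-points of the special fibre. Because $\deg\phi=2^g$ is prime to $p$, multiplication by $\deg\phi$ is an automorphism of the pro-$p$ formal group, and as it factors through $\phi$ via a complementary isogeny $\psi$ with $\psi\phi=[\deg\phi]$, the induced map $\hat\phi$ on formal groups is an isomorphism. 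The snake lemma then identifies $\ker\phi_v$ and $\coker\phi_v$ with those of the reduction $\tilde\phi\colon\tilde{\mathcal{G}}(\F_q)\to\tilde{\mathcal{G}}'(\F_q)$, giving $\tfrac{|\ker\phi_v|}{|\coker\phi_v|}=\tfrac{|\tilde{\mathcal{G}}(\F_q)|}{|\tilde{\mathcal{G}}'(\F_q)|}$. Writing $|\tilde{\mathcal{G}}(\F_q)|=c_{G,v}\,|\tilde{\mathcal{G}}^0(\F_q)|$ (Lang's theorem makes $H^1(\F_q,\tilde{\mathcal{G}}^0)=0$, so the component count is exactly $c_{G,v}$), it remains to show the identity-component point counts cancel.

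I expect this last point to be the main obstacle. The reduced isogeny restricts to an isogeny $\tilde\phi^0\colon\tilde{\mathcal{G}}^0\to\tilde{\mathcal{G}}'^0$ of connected commutative $\F_q$-groups of prime-to-$p$ degree, hence étale, with kernel a finite étale $\F_q$-group $N$. Taking Galois cohomology of $0\to N\to\tilde{\mathcal{G}}^0\to\tilde{\mathcal{G}}'^0\to 0$ and using $H^1(\F_q,\tilde{\mathcal{G}}^0)=0$ gives $\ker=N(\F_q)=H^0(\F_q,N)$ and $\coker\simeq H^1(\F_q,N)$ on $\F_q$-points; since over a finite field $|H^0(\F_q,N)|=|H^1(\F_q,N)|$, we conclude $|\tilde{\mathcal{G}}^0(\F_q)|=|\tilde{\mathcal{G}}'^0(\F_q)|$. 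These two cancel, leaving $\tfrac{|\ker\phi_v|}{|\coker\phi_v|}=\tfrac{c_{A\times B,v}}{c_{A',v}}=\tfrac{c_{A,v}\,c_{B,v}}{c_{A',v}}$, as required; the two archimedean cases are by contrast essentially formal.
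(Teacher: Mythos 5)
Your proof is correct, and it is essentially the argument behind the paper's proof, which consists of citing \cite[Lem.~3.4]{dokchitser2020parity} together with the multiplicativity of Tamagawa numbers and real component counts; the snake-lemma decompositions you use (identity component versus $\pi_0$ over $\R$, formal group versus special fibre over $K_v$ with $p$ odd, plus Lang's theorem and the equality of point counts under an isogeny of connected groups over $\F_q$) are exactly the standard ingredients of that cited lemma. The only step worth spelling out slightly more is why $\tilde\phi^0$ is an isogeny of prime-to-$p$ degree: this follows because the complementary isogeny $\psi$ with $\psi\phi=[2^g]$ also extends to N\'eron models, so $\ker\tilde\phi^0\subseteq\ker[2^g]$ on the identity component of the special fibre, which is finite \'etale of $2$-power order when $p$ is odd.
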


\begin{proof}
 Exactly as in \cite[Lem.~3.4]{dokchitser2020parity}. All the terms involved are multiplicative, i.e. $c_{A \times B,v} = c_{A,v} \, c_{B,v}$ and $n_{A \times B/K} = n_{A/K} \, n_{B/K}$.
\end{proof}

\begin{remark}[{cf. \cite[Lem.~3.4]{dokchitser2020parity}}]
A similar description can be given when $K_v$ is a finite extension of $\Q_2$, with
$$
\frac{|\ker \phi_v|}{|\coker \phi_v |} = \frac{c_{A,v} \, c_{B,v} }{c_{A',v}} \left| \frac{\phi^{*}\omega^o_{A',v}}{\omega^o_{A \times B , v}} \right|.
$$
where $\omega^o$ denotes the N\'{e}ron exterior form. This description will not be useful for us however, and we will appeal to Theorem \ref{2thm} when computing the local term at $2$-adic places.
\end{remark}

\subsection{Kernel of the Prym Isogeny}

The description of the local kernel/cokernel ratio in Lemma \ref{lockercoker} uses the kernel of the Prym isogeny at real places, and so a working description of it will be of benefit. This will also be useful at $2$-adic places. Here, then, we describe this kernel.

\begin{lemma}
\label{kernel}
    Let $C$ be a curve of genus $g$ with  Prym variety $\prym(D/C)$, associated $2$-torsion point $\epsilon$, and Prym isogeny $\phi$. Then
    $$
    \ker \phi = \left\{ (\alpha , \beta) \in \Jac C[2] \times \prym(D/C)[2] \, \mid \, \pi^{*}(\alpha) = \beta \right\}
    $$
\end{lemma}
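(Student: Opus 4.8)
The plan is to identify $\ker \phi$ explicitly by leveraging the relation $\phi^t \circ \phi = [2]$, which was recorded in Section \ref{prymsection}. Since $[2]$ annihilates $\ker \phi$, every element $(\alpha, \beta) \in \ker \phi$ must lie in the $2$-torsion $\Jac C[2] \times \prym(D/C)[2]$; this is the first reduction, and it immediately cuts the problem down to a statement about finite groups of $2$-torsion points. So I would begin by establishing the inclusion $\ker \phi \subseteq \Jac C[2] \times \prym(D/C)[2]$ from $\phi^t \phi = [2]$.

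With that in hand, the task becomes describing which pairs $(\alpha, \beta)$ of $2$-torsion points satisfy $\phi(\alpha, \beta) = \pi^* \alpha + \beta = 0$, i.e. $\beta = -\pi^* \alpha = \pi^* \alpha$ (using that $\beta$ is $2$-torsion so $-\beta = \beta$). Thus the defining condition $\pi^*(\alpha) = \beta$ falls out directly from the definition of $\phi$ on the $2$-torsion. The one subtlety is to check that both coordinates genuinely are $2$-torsion and that the condition is symmetric in the way stated: given $\alpha \in \Jac C[2]$, the element $\pi^* \alpha$ is automatically $2$-torsion, and it lies in $\prym(D/C)[2]$ precisely by Lemma \ref{2tors}, which gives $\pi^*(\Jac C[2]) \supseteq \prym(D/C)[2]$ — here I would want to use the inclusion in the convenient direction, noting that any $\beta$ of the form $\pi^*\alpha$ with $\alpha \in \Jac C[2]$ indeed sits inside $\prym(D/C)$ (one should verify $\pi^*\alpha \in \prym(D/C)$, e.g. via the characterisation $\prym(D/C) = \ker(\mathrm{id} + \iota^*)^0$ together with $\iota^* \pi^* = \pi^*$).

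Concretely I would argue both inclusions. For $\subseteq$: take $(\alpha,\beta) \in \ker\phi$; by the first paragraph it is $2$-torsion, and $\pi^*\alpha + \beta = 0$ forces $\beta = \pi^*\alpha$. For $\supseteq$: given a $2$-torsion pair with $\pi^*\alpha = \beta$, we have $\phi(\alpha,\beta) = \pi^*\alpha + \beta = 2\beta = 0$, so it lies in $\ker\phi$; the only thing to confirm is that such a $\beta$ legitimately lies in $\prym(D/C)[2]$, which is where I would invoke $\iota^*\pi^* = \pi^*$ to place $\pi^*\alpha$ in $\ker(\mathrm{id}+\iota^*)$ and hence (being $2$-torsion and in the kernel) in $\prym(D/C)$.

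The main obstacle, though it is a mild one, is the bookkeeping around whether $\pi^*\alpha$ really lands in the \emph{Prym} variety rather than merely in $\Jac D[2]$: the Prym variety is defined as a connected component, so membership is not purely a kernel condition and requires the involution characterisation $\prym(D/C) = \im(\mathrm{id} - \iota^*) = \ker(\mathrm{id}+\iota^*)^0$. I expect the cleanest route is to observe $\pi^*\alpha = (\mathrm{id} - \iota^*)(\text{something})$ or to use that $\pi^*\alpha$ is fixed-sign under $\iota^*$; either way this is the step demanding care, whereas the torsion reduction via $\phi^t\phi=[2]$ and the algebraic identity $\phi(\alpha,\beta)=\pi^*\alpha+\beta$ are routine.
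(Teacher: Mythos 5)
Your proposal is correct, and its first half coincides with the paper's proof: both reduce to $2$-torsion via $\phi^t\phi=[2]$ and read off $\beta=-\pi^*\alpha=\pi^*\alpha$ from $\phi(\alpha,\beta)=\pi^*\alpha+\beta=0$, giving the inclusion $\ker\phi\subseteq\{(\alpha,\beta)\mid\pi^*\alpha=\beta\}$. Where you diverge is the reverse inclusion: the paper closes the argument by counting, using Lemma \ref{2tors} (every $\beta\in\prym(D/C)[2]$ is $\pi^*$ of something in $\Jac C[2]$) and $\ker\pi^*=\{0,\epsilon\}$ to show the right-hand set has exactly $2\cdot 2^{2(g-1)}=|\ker\phi|$ elements, whereas you simply verify membership directly: if $(\alpha,\beta)\in\Jac C[2]\times\prym(D/C)[2]$ with $\pi^*\alpha=\beta$, then $\phi(\alpha,\beta)=\pi^*\alpha+\beta=2\beta=0$. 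Your route is the more elementary one; it needs neither Lemma \ref{2tors} nor the value of $|\ker\phi|$ (the paper's count, by contrast, is what certifies that the set is as large as the kernel, and implicitly reuses the surjectivity statement of Lemma \ref{2tors}). One remark: the ``main obstacle'' you flag --- whether $\pi^*\alpha$ genuinely lands in the Prym variety rather than merely in $\ker(\mathrm{id}+\iota^*)$ --- is a non-issue for the inclusion as you have set it up, since $\beta\in\prym(D/C)[2]$ is part of the hypothesis defining the set and $\pi^*\alpha=\beta$ is assumed; the connectedness subtlety would only arise if you tried to parametrise the set by $\alpha$ alone, which your argument does not require.
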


\begin{proof}
Certainly $\ker \phi \subset \Jac C[2] \times \prym(D/C)[2]$. If $(\alpha, \beta) \in \ker \phi$, then $\pi^{*}(\alpha) = -\beta = \beta$. Each of the $2^{2(g-1)}$ elements of $\prym(D/C)[2]$ lies in the image of $\pi^{*}$ by Lemma \ref{2tors}. As $\ker \pi^{*} = \{ 0 , \epsilon \}$, there are then $2 \cdot 2^{2(g-1)} = | \ker \phi |$ pairs $( \alpha , \beta )$ with $\pi^{*}(\alpha) = \beta$. 
\end{proof}

Whilst this description is succinct, it does not tell us how to compute the kernel. To do this, it is necessary to recall the structure of $2$-torsion on both non-hyperelliptic curves of genus 3 and general hyperelliptic curves. 

\subsubsection{Two-torsion on Non-Hyperelliptic Curves of Genus 3} We summarise \cite[\S 6]{dolgachev}. We will consider non-hyperelliptic genus 3 curves as plane quartics. By a classic result, a plane quartic $C$ has 28 bitangents and any pair of bitangents specifies a $2$-torsion point of $\Jac C$. However ${28 \choose 2} = 378$ so this overcounts $\Jac C[2]$ by a factor of $6$, and indeed each $2$-torsion point is identified by $6$ distinct pairs of bitangents. Such a sextuplet of pairs is called a \textit{Steiner complex}, and two pairs of bitangents belong to the same Steiner complex if and only if the 8 points of tangency lie on a conic. We will specify a $2$-torsion point by giving one or more pairs of bitangents.

\subsubsection{Two-torsion on Hyperelliptic Curves} We summarise \cite[\S 5.2.2]{dolgachev}. Suppose $C \colon y^2 = f(x)$ is a hyperelliptic curve of genus $g$, where (without loss of generality), $\deg f = 2g + 2$. Consider the set of subsets of $B_g = \{1, 2, \ldots, 2g + 2 \}$ with even cardinality, modulo the relation $I \sim B_g \setminus I$, and equipped with the symmetric sum $I + J = I \cup J \setminus ( I \cap J )$. Denote the quotient $E_g$. Then there is an isomorphism $E_g \simeq \Jac C[2]$. Each element of $E_g$ is represented by some subset $I \subset B_g$ of even cardinality, with $B_g \setminus I$ belonging to the same class. Thus we can indicate particular $2$-torsion points by giving an (unordered) list of points on $C$ of the form $(\alpha, 0)$, where $f(\alpha) = 0$. In genus $2$ and $3$ we need only use lists of size $2$ or $4$.

\subsubsection{Two-Torsion on $\Jac C \times \prym(D/C)$}

When $\prym(D/C)$ is of the form $\Jac (y^2 = f(x)) \times \Jac (y^2 = g(x))$ then by abuse of notation, the points coming from the roots of $f$ (labelled $P_i$) and the roots of $g$ (labelled $P'_i$) will also be considered as points of $\Jac C = \Jac \left(y^2 = f(x)g(x)\right)$. Then to give $(\alpha, \beta) \in \Jac C [2] \times \prym(D/C) [2]$ we give $\alpha$ as a list of some $P_i$ and $P'_i$, and $\beta$ as two lists, the first consisting only of the $P_i$, the second only of the $P'_i$. If $\deg g = 2$ (so that $\Jac(y^2 = g(x))$ is trivial), we omit the second list. 

The case when $C$ is a non-hyperelliptic curve of genus $3$ will require a different labelling scheme. Recall that such curves admitting a Prym variety are of the form $Q_1(x,y,z) Q_3(x,y,z) - Q_2(x,y,z)^2 = 0$ by Table \ref{prymtable}. The Prym variety is then the Jacobian of the curve $y^2 = - \det(Q_1 + 2x Q_2 + x^2 Q_3)$, where the $Q_i$ are also considered as symmetric $3 \times 3$ matrices. The roots of $-\det(Q_1 + 2x Q_2 + x^2 Q_3)$ yield degenerate conics which are pairs of bitangents, and the six such pairs from all the roots form a Steiner complex. Hence in this case $(\alpha, \beta) \in \Jac C[2] \times \prym(D/C)[2]$ can be specified as follows: $\alpha$ as (up to) six pairs of bitangents, all belonging to the same Steiner complex, $\beta$ as any two of those six pairs (this giving two roots of $-\det(Q_1 + 2 x Q_2 + x^2 Q_3)$).

\subsubsection{Description of $(\pi^{*})^{-1}$} 

In light of Lemma \ref{kernel}, we must describe $(\pi^{*})^{-1}(\beta)$ for $\beta \in \prym(D/C)[2]$. There will be two such points in the pre-image, though it suffices to find only one, as if $\pi^{*}(\alpha) = \beta$, then $\pi^{*}(\alpha + \epsilon) = \beta$ also.

\begin{proposition}
\label{pull-back}
Let $C$ be a curve with $2$-torsion point $\epsilon$, and corresponding Prym variety  $\prym(D/C)$. For $\beta \in \prym(D/C)[2]$, $(\pi^{*})^{-1}(\beta)$ is as described in the following table.

\begin{table}[h!]
\centering
\begin{tabular}{|c|c|c|c|}
\hline
Case  & $C$                                                & $\beta$                                                                         & $(\pi^{*})^{-1}(\beta)$                                                                                                                \\ \hline
II    & $y^2 = f(x)g(x), \, \deg f = 4, \deg g = 2$        & $[P_i, P_j]$                                                                    & $[P_i, P_j]$, $[P_i, P_j] + \epsilon$                                                                                             \\ \hline
III.a & $y^2 = f(x)g(x), \, \deg f = 6, \deg g = 2$        & $[P_i, P_j]$                                                                    & $[P_i, P_j]$, $[P_i, P_j] + \epsilon$                                                                                             \\ \hline
III.b & $y^2 = f(x)g(x), \, \deg f = 4, \deg g = 4$        & $\left( [P_i, P_j], [P'_k, P'_l] \right)$                                       & \multirow{2}{*}{ \parbox{3cm}{\centering $[P_i, P_j, P'_k, P'_l] $, \\ $[P_i, P_j, P'_k, P'_l] + \varepsilon $ } }                        \\ \cline{1-3}
III.c & $y^2 = N_{K(\sqrt{d})[x]/K[x]}R(x), \, \deg R = 4$ & $\left( [P_i, P_j], [P'_k, P'_l] \right)$                                       &                                                                                                                                   \\ \hline
III.d & $Q_1(x,y,z)Q_3(x,y,z) - Q_2(x,y,z)^2 = 0$          & \parbox{3cm}{\centering $\left[ \{ b_1 , b_2 \} , \{ b_3 , b_4 \} \right] $, \\ $b_i$ bitangents } & \parbox{3cm}{\centering \footnotesize $\{ \{b_1, b_3\}, \{ b_2, b_4 \}, \ldots \}$, \\ $\{ \{b_1, b_4\}, \{ b_2, b_3 \}, \ldots \}$} \\ \hline
\end{tabular}
\end{table}

In Case III.c, the second factor of $\prym(D/C)$ is the Jacobian of the conjugate curve $y^2 = f^{\sigma}(x)$, so we can identify $P'_k = P^{\sigma}_k$. Note also that in Case III.d the pairs of bitangents $\{ b_1, b_3 \}$ and $\{ b_2, b_4\}$ (resp. $\{ b_1, b_4 \}$ and $\{ b_2, b_3 \}$) do indeed belong to the same Steiner complex, as the eight intersection points of these four bitangents all lie on a conic.

\end{proposition}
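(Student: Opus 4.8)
The plan is to leverage the relation $\ker \pi^* = \{0,\epsilon\}$, which is already used in the proofs of Lemmas \ref{2tors} and \ref{kernel}. It implies that each fibre $(\pi^*)^{-1}(\beta)$ has exactly two elements, and that if $\alpha$ is one of them then the other is $\alpha + \epsilon$. Hence it suffices, in each case, to produce a single class $\alpha \in \Jac C[2]$ with $\pi^*(\alpha) = \beta$; the two table entries ``$\alpha$'' and ``$\alpha + \epsilon$'' then follow formally. This reduces the proposition to an explicit evaluation of the pull-back $\pi^*$ on the combinatorial models of $2$-torsion recalled above, which I would carry out separately for the hyperelliptic cases (II, III.a, III.b, III.c) and the non-hyperelliptic case III.d.

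For the hyperelliptic cases I would use the fibre-product structure of the cover. Setting $F_1 \colon u^2 = f(x)$ and $F_2 \colon v^2 = g(x)$, the curve $D$ is the normalisation of $F_1 \times_{\mathbb{P}^1} F_2$, with $\pi(x,u,v) = (x,uv)$, deck involution $\iota(x,u,v) = (x,-u,-v)$, and projections $p_i \colon D \to F_i$. Since $\iota$ covers the hyperelliptic involution on each $F_i$, one checks $\iota^* p_i^* = -p_i^*$, so that $p_1^*(\Jac F_1)$ (Cases II, III.a) or $p_1^*(\Jac F_1) + p_2^*(\Jac F_2)$ (Case III.b, and Case III.c over $K(\sqrt d)$) is exactly $\prym(D/C) = \im(\textup{id}-\iota^*)$. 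The key point is that for a Weierstrass point $P = (\rho,0)$ of $C$ with $f(\rho) = 0$, the two points of $D$ above $P$ are precisely the two points lying over $(\rho,0) \in F_1$, so that $\pi^* P = p_1^*\big((\rho,0)_{F_1}\big)$ as divisors, and symmetrically for roots of $g$. Writing $\alpha$ and $\beta$ as sums of such Weierstrass classes through the model $E_g \simeq \Jac C[2]$ and its analogue for the hyperelliptic factors of $\prym(D/C)[2]$, the stated identities then follow from the $\F_2$-linearity of $\pi^*$. Case III.c is identical after base change to $K(\sqrt d)$, the Galois conjugation $\sigma$ swapping the two factors; this is precisely the source of the identification $P'_k = P_k^\sigma$.

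The essential case is III.d, where I would work through the bitangent model of $2$-torsion in its sharper additive form: to each of the $28$ bitangents $b_i$ of $C$ there is an odd theta characteristic $\theta_i$ (the class of its two tangency points, a degree-$2$ divisor with $2\theta_i = K_C$), a pair of bitangents represents the $2$-torsion point $[\{b_i, b_j\}] = \theta_i - \theta_j$, and two pairs lie in a common Steiner complex exactly when they are equal in $\Jac C[2]$, equivalently when their eight tangency points are concyclic. The geometric input I would take from Bruin \cite{BruinGenus3} is that the six degenerate members $\mathcal{Q}_{\lambda_k}$ of the pencil $Q_1 + 2xQ_2 + x^2 Q_3$, occurring at the six roots $\lambda_k$ of $-\det$, split into six pairs of bitangents constituting precisely the Steiner complex of $\epsilon$; thus, with a suitable labelling of the two lines of each $\mathcal{Q}_{\lambda_k}$, one has $\theta_1 - \theta_2 = \theta_3 - \theta_4 = \epsilon$ for the pairs coming from $\lambda_1, \lambda_2$. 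Identifying the root $\lambda_k$ of $F$ with its bitangent pair, the class $\beta = [(\lambda_1,0) - (\lambda_2,0)] \in \Jac F[2]$ is recorded as $[\{b_1,b_2\},\{b_3,b_4\}]$.

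It then remains to show that $\pi^*(\alpha) = \beta$ for $\alpha = [\{b_1,b_3\}] = \theta_1 - \theta_3$; the second preimage is automatically $\alpha + \epsilon = \theta_1 - \theta_3 + (\theta_3 - \theta_4) = \theta_1 - \theta_4 = [\{b_1,b_4\}]$, the crossed pairing $\{b_1,b_4\},\{b_2,b_3\}$ in the table. This identity is the crux, and I expect it to be the main obstacle: unlike the hyperelliptic cases there is no fibre-product shortcut, and one must compute $\pi^*$ directly through the explicit correspondence between $\Jac C[2]$, the bitangents, and $\Jac F[2]$ furnished by Bruin's determinantal description, in particular fixing the within-pair labelling of the lines of each $\mathcal{Q}_{\lambda_k}$ (dictated by the two sheets of $\pi$, on which $\sqrt{Q_1 Q_3} = \pm Q_2$ is trivialised) so that the crossing pairs $b_1$ with $b_3$ rather than with $b_4$. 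Granting this, the consistency remark closing the statement is immediate: since $\theta_1 - \theta_2 = \theta_3 - \theta_4$ forces $\theta_1 - \theta_3 = \theta_2 - \theta_4$, the pairs $\{b_1,b_3\}$ and $\{b_2,b_4\}$ are equal in $\Jac C[2]$ and hence syzygetic, so their eight tangency points lie on a conic, and the same argument applies to $\{b_1,b_4\}$ and $\{b_2,b_3\}$.
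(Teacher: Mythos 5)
Your reduction to finding a single preimage (via $\ker\pi^{*}=\{0,\epsilon\}$) and your treatment of the hyperelliptic cases II, III.a--c are sound and essentially coincide with the paper's argument: the paper likewise computes $\pi^{*}$ of a difference of Weierstrass points on the affine fibre-product model of $D$ and matches it against the image of the corresponding divisor on $F$ under $\pi_1^{*}$ (invoking \cite[Prop.~2.2]{BruinGenus3} for the identification $\pi_1^{*}\colon\Jac F\xrightarrow{\sim}\prym(D/C)$), which is the same computation you describe via the projections $p_i$.

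The problem is Case III.d, which you yourself flag as ``the crux'' and then handle only by ``Granting this.'' That is a genuine gap: the identity $\pi^{*}\bigl([\{b_1,b_3\}]\bigr)=\beta$ is exactly the nontrivial content of the proposition in the non-hyperelliptic case, and it cannot be extracted from the theta-characteristic formalism alone --- abstractly one only learns that the two preimages of $\beta$ form some coset $\{\alpha,\alpha+\epsilon\}$ inside $\epsilon^{\perp}$, not that $\alpha$ is the crossed pairing $\theta_1-\theta_3$ rather than some other class syzygetic with $\epsilon$. The paper closes this gap by making the embedding $\Jac F\hookrightarrow\Jac D$ explicit through Bruin's rulings: for a root $x_1$ of $-\det(Q_1+2xQ_2+x^2Q_3)$ the quadric $l_1l_2-(r+x_1s)^2$ contains the plane $V=\{(\gamma_1,\gamma_2,\gamma_3,-x_1t,t):l_1(\gamma_1,\gamma_2,\gamma_3)=0\}$, one computes the intersection cycle $D\cdot V=\pi^{*}(P_1)+\pi^{*}(Q_1)$ with $P_1,Q_1$ the tangency points of $b_1$, and hence the image of $[(x_1,0),(x_2,0)]$ in $\Jac D$ is the class of $\pi^{*}(P_1+Q_1+P_3+Q_3-\kappa_C)$, i.e.\ the pull-back of the pair $\{b_1,b_3\}$. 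This is the concrete computation your proposal defers; without it the essential case of the proposition is unproved. (One small mitigating point: the within-pair labelling ambiguity you worry about is harmless for the statement itself, since swapping $b_1\leftrightarrow b_2$ merely interchanges the two listed preimages $\{b_1,b_3\}$ and $\{b_1,b_4\}$, which differ by $\epsilon$ in any case --- but this does not substitute for the intersection computation.)
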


\begin{proof}
This can be done explicitly with divisors. We do this for Case II, and note that the other hyperelliptic cases are similar, though in Case III.d the map $\Jac F \hookrightarrow \Jac D$ is more subtle, and we treat it separately.

In Case II, $D$ is given by
$$
D \colon
\begin{cases}
y^2 = f(x) \\
z^2 = g(x)
\end{cases}
$$
in affine $3$-space, with projection map $\pi_1 \colon D \to F$, $\pi_1(x,y,z) = (x,y)$. The pull-back $\pi_1^{*}$ gives an isomorphism between $\Jac F$ and $\prym(D/C)$ \cite[Prop.~2.2]{BruinGenus3}, and we use this to move from divisors on $F$ to divisors on $D$.

Write $\gamma_i$ for the roots of $f$. Then $[P_i, P_j]$ on $\Jac F$ can be represented as the divisor $(\gamma_i, 0) - (\gamma_j, 0)$. Under the isomorphism $\pi_1^{*}$ this yields the divisor
$$
(\gamma_i, 0, \sqrt{g(\gamma_i)}) + (\gamma_i, 0, -\sqrt{g(\gamma_i)}) - (\gamma_j, 0, \sqrt{g(\gamma_j)}) - (\gamma_j, 0, -\sqrt{g(\gamma_j)})
$$
on $\Jac D$. We note simply that this agrees with
$$
\pi^{*}( (\gamma_i, 0) - (\gamma_j, 0) ) = (\gamma_i, 0, \sqrt{g(\gamma_i)}) + (\gamma_i, 0, -\sqrt{g(\gamma_j)}) - (\gamma_j, 0, \sqrt{g(\gamma_j)}) - (\gamma_j, 0, -\sqrt{g(\gamma_j)}).
$$

We now consider Case III.d. First we describe the map $\Jac F \hookrightarrow \Jac D$, summarising \cite[\S~4,5]{BruinGenus3}. For each $P = (x, y) \in F$, there is a quadric
$$
Q_1(u,v,w) + 2x \, Q_2(u,v,w) + x^2 \, Q_3(u,v,w) - (r + xs)^2
$$
in $u, v, w, r$ and $s$. The zero-set of this quadric contains two rulings of $2$-planes, one coming from $(x,y)$, the other from $(x,-y)$. Let $V^+$ be a plane from the $(x,y)$-ruling. Write $\mathcal{U}_P = D \cdot V^+$. A point of $\Jac F$ can be represented as a divisor of the form $P_1 + P_2 - \kappa_F$, then the image of $\mathcal{D}$ under $\Jac F \hookrightarrow \Jac D$ is $\mathcal{U}_{P_1} + \mathcal{U}_{P_2} - \pi^{*}(\kappa_C)$.

Let $(x_1,0) + (x_2,0) - \kappa_F$ represent a two-torsion point of $\Jac F$. The conic $Q_1 + 2x_1 Q_2 + x_1^2 Q_3$ gives a pair of bitangents $b_1, b_2$ with linear forms $l_1(u,v,w), l_2(u,v,w)$ respectively. Similarly $x_2$ gives bitangents (resp. linear forms) $b_3, b_4$ (resp. $l_3(u,v,w), l_4(u,v,w)$). We can write down one of the planes, $V$, contained in the zero-set of $l_1(u,v,w) l_2(u,v,w) - (r + x_1 s)^2 = 0$; it is the set of points $(\gamma_1, \gamma_2, \gamma_3, -x_1 t, t)$ such that $l_1(\gamma_1, \gamma_2, \gamma_3) = 0$. Let $P_i, Q_i$ be the two points of intersection of $b_i$ and $C$. Then $D \cdot V = \pi^{*}(P_1) + \pi^{*}(Q_1)$. So $\Jac F \hookrightarrow \Jac D$ sends $[(x_1,0), (x_2,0)]$ to (the class of) $\pi^{*}(P_1) + \pi^{*}(Q_1) + \pi^{*}(P_3) + \pi^{*}(Q_3) - \pi^{*}\kappa_C$. It suffices to observe that this is the pull-back of the divisor
$$
P_1 + Q_1 + P_3 + Q_3 - \kappa_C, 
$$
which comes from the bitangent pair $\{ b_1, b_3 \}$.
    
\end{proof}

\subsection{Local Terms at Particular Places}

\subsubsection{Archimedean Places}

The contribution from complex places is specified completely by Lemma \ref{lockercoker}, and so will not be discussed further. Consider, then, the case of real archimedean places. The following two theorems are used to compute the real term in the local formula. 

\begin{theorem}[{See \cite[Prop.~3.2.2,~3.3]{GrossandHarris}}]
\label{numbercomps}
Let $C$ be a genus $g$ curve over $\R$. Then
$$
n_{\Jac C / \R} = 
\begin{cases}
2^{n_{C/\R} - 1} &\text{if $n_{C/\R} > 0$} \\
2  &\text{if $n_{C/\R} = 0$ and $g$ is odd} \\
1 &\text{if $n_{C/\R} = 0$ and $g$ is even}
\end{cases}
$$
\end{theorem}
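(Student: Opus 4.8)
The plan is to pass to the complex uniformization and identify the component group of $\Jac C(\R)$ with a Galois cohomology group, then read that cohomology off from the topology of the real locus. Write $X = C(\C)$ for the Riemann surface with its antiholomorphic involution $\sigma$ (complex conjugation), set $G = \mathrm{Gal}(\C/\R) = \{1,\sigma\}$, and let $\Lambda = H_1(X,\Z)$ be the period lattice, so that $\Jac C(\C) = V/\Lambda$ with $V = \Lambda\otimes\R \isom \C^g$ carrying the conjugate-linear lift $\tilde\sigma$ of $\sigma$. First I would take $G$-cohomology of $0 \to \Lambda \to V \to \Jac C(\C) \to 0$. Since $V$ is an $\R$-vector space it is cohomologically trivial for the $2$-group $G$, so the long exact sequence collapses to $0 \to V^{\tilde\sigma}/\Lambda^{\tilde\sigma} \to \Jac C(\R) \to H^1(G,\Lambda) \to 0$. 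The subgroup $V^{\tilde\sigma}/\Lambda^{\tilde\sigma}$ is the image of a connected group of full real dimension $g$, hence is exactly the identity component $\Jac C(\R)^0$; therefore $\pi_0(\Jac C(\R)) \isom H^1(G,\Lambda)$ and the whole problem becomes the computation of $\#H^1(G,\Lambda)$.

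Next I would reduce this to a single integer invariant of the $G$-lattice $\Lambda$. By Reiner's classification of $\Z[G]$-lattices for $G$ of order two, $\Lambda$ splits as a sum of copies of the three indecomposables $\Z[G]$, $\Z^+$ (trivial action) and $\Z^-$ (sign action), with some multiplicities $a,b,c$. A direct Tate-cohomology computation gives $H^1(G,\Z[G]) = 0$, $H^1(G,\Z^+) = 0$ and $H^1(G,\Z^-) = \Z/2$, whence $\#H^1(G,\Lambda) = 2^c$. Comparing $\pm$-eigenspace dimensions over $\Q$ with the fact that $\tilde\sigma$ is conjugate-linear (so multiplication by $i$ interchanges $V^{+}$ and $V^{-}$, forcing each to have real dimension $g$) yields $a+b = a+c = g$; in particular $b = c$ and $c = g - a$. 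So everything hinges on determining $c$, the number of sign summands, from the geometry of $C(\R)$.

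For the case $n_{C/\R} = s > 0$ I would produce the components by hand and match them against $H^1$. Restricting a real line bundle to each oval $\gamma_i \isom S^1$ and recording its first Stiefel--Whitney class defines a homomorphism $\Pic^0 C(\R) \to \F_2^{\,s}$ that is locally constant, hence factors through $\pi_0$. Because the total degree is even on a class of degree $0$, the image lies in the sum-zero hyperplane $H \isom \F_2^{\,s-1}$; conversely the real divisor classes $[p_i - p_j]$ (with $p_i \in \gamma_i$, $p_j \in \gamma_j$ real points) realise all of $H$, so the map is onto $H$ and $c \geq s-1$. The reverse inequality $c \leq s-1$, together with the entire case $s = 0$, I would extract from Weichold's normal form for $(\Lambda,\sigma_*)$, i.e.\ \cite[Prop.~3.2.2]{GrossandHarris}: this records $a,b,c$ in terms of $s$ and of whether $X \setminus X^\sigma$ is disconnected (the dividing/non-dividing dichotomy), giving $c = s-1$ once $s>0$, and --- since $s = 0$ forces the non-dividing type --- $c = 1$ for $g$ odd and $c = 0$ for $g$ even. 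Substituting into $\#\pi_0(\Jac C(\R)) = 2^c$ gives the three cases of the statement.

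The main obstacle is exactly this last structural input: the hands-on oval count only bounds the number of components from below, and pinning it from above --- equivalently, proving that the topology of $C(\R)$ (the integer $s$ plus the dividing type) determines the full $\Z[G]$-module $\Lambda$ with no ``hidden'' sign summands --- is the genuine content. It is also the only place where the parity of $g$ can enter, through the constraint $s \equiv g+1 \pmod 2$ in the dividing case and the resulting behaviour when $C$ has no real points at all. I would therefore treat the $s>0$ lower bound as routine and concentrate the real work on reproving Weichold's normal form, or simply cite \cite[Prop.~3.2.2]{GrossandHarris} for it.
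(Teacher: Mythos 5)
The paper offers no proof of this statement: it is quoted directly from \cite{GrossandHarris}, so there is nothing internal to compare against. Your reconstruction is correct and is essentially the argument of that reference --- identify $\pi_0(\Jac C(\R))$ with $H^1(\mathrm{Gal}(\C/\R),H_1(C(\C),\Z))$ via the uniformization sequence, reduce to counting $\Z^-$ summands by Reiner's classification, and determine that count from the topology of $C(\R)$ --- and the one genuinely non-routine input you isolate (Weichold's normal form, \cite[Prop.~3.2.2]{GrossandHarris}) is precisely what the paper itself cites.
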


\begin{definition}
Let $X/\R$ be a curve and suppose $\textup{Comp}(X) = \{X_1, \ldots X_n \}$ is the set of real components of $X$, with $n > 0$. For a divisor $D = \sum n_i P_i \in \textup{Div}^0{X}$, let
$$
d_{X_j}(D) = \sum_{P_i \in X_j} n_i \mod 2.
$$
Let $d \colon \textup{Div}^{0}(X) \to (\Z/2\Z)^n$ be the function $d(D) = (d_{X_1}(D), \ldots, d_{X_n}(D))$. By \cite[Lem.~4.1]{GrossandHarris}, $d$ descends to a map $\Jac X \to (\Z/2\Z)^n$.
\end{definition}

\begin{corollary}[{See \cite[\S~4.1]{realabelianvarieties}, \cite[\S.~2]{realhypersurfaces}}]
\label{realcomps}
Let $X$ be as above with $X(\R) \neq \emptyset$. Then two points $P_1, P_2$ in $\Jac X(\R)$ belong to the same real component if and only if $d(P_1) = d(P_2)$. In particular, a divisor belongs to $\Jac X (\R)^0$ precisely when it has even intersection degree with all components of $X$.
\end{corollary}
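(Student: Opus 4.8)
The plan is to realise $d$ as a group homomorphism $\Jac X(\R) \to (\Z/2\Z)^n$ and then to pin down both its kernel and its image by comparing orders against the known count of real components. First I would record two structural facts about $d$, which descends from divisors to a homomorphism on $\Jac X$ by \cite[Lem.~4.1]{GrossandHarris} and hence restricts to a homomorphism on the subgroup $\Jac X(\R)$. On the one hand, its image lands in the ``even weight'' subgroup $H = \{(a_1,\dots,a_n) : \sum_j a_j = 0\} \subset (\Z/2\Z)^n$: for a degree-$0$ divisor $D$ defined over $\R$ the non-real points occur in complex-conjugate pairs with equal multiplicities and lie on no real component $X_j$, so $\sum_j d_{X_j}(D) \equiv \deg D = 0 \pmod{2}$. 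Since $n = n_{X,\R} > 0$, the subgroup $H$ has order $2^{n-1}$.

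The key step is to show that $d$ is constant on connected components, i.e. that $d(\Jac X(\R)^0) = 0$. Here I would avoid any delicate analysis of how the support of a divisor moves, and instead use divisibility: $\Jac X(\R)^0$ is a connected compact abelian real Lie group of dimension $g$, hence isomorphic to $(\R/\Z)^g$, so multiplication by $2$ is surjective on it. Thus any $x \in \Jac X(\R)^0$ may be written $x = 2y$ with $y \in \Jac X(\R)^0$, and since $d$ is a homomorphism into a group of exponent $2$, $d(x) = 2\,d(y) = 0$. Therefore $d$ factors through $\pi_0(\Jac X(\R))$, inducing $\bar d \colon \pi_0(\Jac X(\R)) \to H$. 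This already yields the forward implication, that two classes lying in the same real component have the same $d$-value.

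For the reverse implication I would show that $\bar d$ is an isomorphism by a cardinality argument. By Theorem \ref{numbercomps}, applied with $n_{X,\R} = n > 0$, we have $|\pi_0(\Jac X(\R))| = 2^{n-1} = |H|$, so it suffices to prove $\bar d$ is surjective. The even-weight group $H$ is generated by the vectors $e_j + e_k$, and each such vector is realised as $d([P-Q])$ for real points $P \in X_j$, $Q \in X_k$; hence $\bar d$ is onto, therefore bijective. Injectivity of $\bar d$ forces $d(P_1) = d(P_2)$ to place $P_1, P_2$ in the same component, and also identifies the kernel of $d$ on $\Jac X(\R)$ with exactly $\Jac X(\R)^0$, which is the ``in particular'' statement that a class lies in the identity component precisely when it has even intersection degree with every real component.

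The step I expect to carry the real weight is the matching of the two orders $2^{n-1}$: everything hinges on Theorem \ref{numbercomps} giving $|\pi_0(\Jac X(\R))| = 2^{n-1}$ together with the degree-parity constraint forcing the image into the index-$2$ subgroup $H$ of the same order, so that surjectivity upgrades to bijectivity. By contrast the local constancy of $d$, usually the technical crux in such arguments, is here reduced to the soft observation that the identity component is $2$-divisible; the only genuine computation is the conjugate-pair bookkeeping that places the image inside $H$.
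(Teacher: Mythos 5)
Your argument is correct. Note that the paper does not prove this corollary itself --- it defers entirely to the cited sources (Gross--Harris and the literature on real abelian varieties), where the component structure of $\Jac X(\R)$ is obtained rather more structurally, from the action of complex conjugation on $H_1(X,\Z)$ and the period matrix. What you give instead is a self-contained derivation from exactly the two ingredients the paper does state: the descent of $d$ to $\Jac X$ (Lemma 4.1 of Gross--Harris) and the component count $n_{\Jac X/\R}=2^{n_{X,\R}-1}$ of Theorem \ref{numbercomps}. Your two key moves are both sound and pleasantly soft: the $2$-divisibility of the compact connected group $\Jac X(\R)^0$ forces $d$ to vanish on it (replacing the usual continuity/deformation argument for local constancy), and the conjugate-pair bookkeeping pins the image inside the even-weight subgroup $H$ of order $2^{n-1}$, so that surjectivity onto $H$ (realised by differences of real points on distinct components, using $X(\R)\neq\emptyset$) plus the cardinality match upgrades $\bar d$ to an isomorphism. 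The only point worth making explicit is that every class in $\Jac X(\R)$ is represented by a Galois-stable divisor --- this is where $X(\R)\neq\emptyset$ enters a second time, killing the Brauer obstruction --- but this is already implicit in the paper's definition of $d$ on $\textup{Div}^0(X)$ over $\R$. The trade-off relative to the cited proofs is that your route gives the isomorphism $\pi_0(\Jac X(\R))\simeq H$ only by counting, whereas the period-lattice approach exhibits it directly; for the purposes this corollary serves in the paper (identifying which explicit $2$-torsion divisors lie on the identity component), your version is entirely adequate.
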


\begin{remark}
For a genus $g$ curve $X$, $(\Jac X)(\R)^0$ is a $2^g$-dimensional real Lie manifold, and so has $2^g$ real $2$-torsion points. Simply counting the points is not enough here, though, as we need to identify the points explicitly. We also remark that when $X(\R) = \emptyset$, identifying which component a given divisor of $\Jac X(\R)$ belongs to is often more delicate, but we will not need this for the following examples. 
\end{remark}

\subsubsection{Non-Archimedean Places not Above $2$}

Evaluating the local formula at non-archimedean places not above $2$ reduces to calculating Tamagawa numbers by Lemma \ref{lockercoker}. In practice this can be done using SAGE \cite{sagemath} or \MAGMA \cite{magma}.

\subsubsection{Non-Archimedean Places Above $2$}

Analysing the local formula at places above $2$ in the same manner as those not above $2$ introduces additional terms which can be difficult to manage. It is preferable to forgo this entirely, and invoke the theorem below. There is a trade-off, though, in the control that is required over both the base curve and the Prym variety.

\begin{theorem}[{See \cite[Thm.~A.1]{dokchitser2020parity}}]
\label{2thm}
Let $C$ be a curve of genus $g = 2$ or $3$ over a finite extension $\mathcal{K}$ of $\Q_2$, with Prym variety $\prym(D/C)$, and let $A = \Jac C \times \prym(D/C)$. Suppose that $A$ has good ordinary reduction, and write $A_1(\bar{K})$ for the kernel of reduction. Then
$$
\frac{|\coker\phi|_K|}{| \ker\phi |_K |} = 2^{[ K \colon \Q_2 ] \dim_{\F_2} \left( A_1(\bar{K})[2] \cap A(\bar{K})[\phi] \right)}
$$
\end{theorem}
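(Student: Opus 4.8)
The plan is to follow the standard dévissage for isogenies of abelian varieties with good reduction, isolating the contribution of the formal group from that of the reduction. Write $\O_K$, $\fm$ and $k$ for the ring of integers, maximal ideal and residue field of $K$, set $q = |k|$, $f = [k:\F_2]$ and $e = \ord_K(2)$ (so $[K:\Q_2] = ef$ and $q = 2^f$), and let $A_1(K) = \ker(A(K) \to \tilde A(k))$ be the kernel of reduction. Since $A$ and $A' = \Jac D$ have good reduction, $\phi$ extends to an isogeny of abelian schemes over $\O_K$, inducing both a homomorphism of formal groups $\hat\phi \colon \hat A(\fm) \to \hat{A'}(\fm)$ and a homomorphism of reductions $\tilde\phi \colon \tilde A(k) \to \tilde{A'}(k)$. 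First I would apply the snake lemma to the map of short exact sequences $0 \to A_1(K) \to A(K) \to \tilde A(k) \to 0$ and its analogue for $A'$ induced by $\phi$; taking the alternating product of orders in the resulting six-term exact sequence yields
\[
\frac{|\coker \phi|_K|}{|\ker \phi|_K|} = \frac{|\coker \hat\phi|}{|\ker \hat\phi|} \cdot \frac{|\coker \tilde\phi|}{|\ker \tilde\phi|},
\]
reducing the problem to the two factors on the right.

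The reduction factor is trivial. Here $\tilde\phi$ is an isogeny of abelian varieties over the finite field $k$, and isogenous abelian varieties over a finite field have equally many rational points, so $|\tilde A(k)| = |\tilde{A'}(k)|$; exactness of $0 \to \ker\tilde\phi \to \tilde A(k) \to \tilde{A'}(k) \to \coker\tilde\phi \to 0$ then forces $|\ker\tilde\phi| = |\coker\tilde\phi|$. Hence the second factor is $1$ and everything is concentrated in the formal group.

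For the formal-group factor I would invoke the change-of-variables formula for homomorphisms of formal groups: $\hat\phi$ scales Haar measure by $|\det \mathrm{Lie}\,\phi|_K$, where $\mathrm{Lie}\,\phi$ is the induced map on the rank-$g$ free $\O_K$-modules $\mathrm{Lie}\,\hat A$, $\mathrm{Lie}\,\hat{A'}$, so that $|\coker\hat\phi|/|\ker\hat\phi| = q^{\ord_K(\det \mathrm{Lie}\,\phi)}$ (this is insensitive to the torsion in $\hat A(\fm)$, as one checks on $[2]$). It remains to compute $\ord_K(\det \mathrm{Lie}\,\phi)$. Decompose the kernel group scheme over $\O_K$ by its connected--étale sequence $0 \to (\ker\phi)^0 \to \ker\phi \to (\ker\phi)^{\mathrm{et}} \to 0$. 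The étale part induces an isomorphism on Lie algebras and so contributes nothing to the determinant, whereas---using crucially that $A$ has \emph{ordinary} reduction---the connected part $(\ker\phi)^0$ is of multiplicative type; being killed by $2$ (as $\phi\phi^t = [2]$ gives $\ker\phi \subset A[2]$), it is étale-locally isomorphic to $\mu_2^{\,d}$, where $d$ is its height. Each $\mu_2$-factor is the kernel of $[2]$ on $\hat{\mathbb{G}}_m$ and contributes $\ord_K(2) = e$ to $\ord_K(\det\mathrm{Lie}\,\phi)$, giving $\ord_K(\det\mathrm{Lie}\,\phi) = ed$. Finally, the geometric points of $\ker\phi$ reducing into the connected component of the special fibre are precisely the points of $A[\phi]$ lying in the kernel of reduction, and their number equals $|(\ker\phi)^0| = 2^d$; that is, $d = \dim_{\F_2}\!\big(A_1(\bar K)[2] \cap A(\bar K)[\phi]\big)$. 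Combining, $|\coker \phi|_K|/|\ker \phi|_K| = q^{ed} = 2^{fed} = 2^{[K:\Q_2]\,d}$, as claimed.

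The main obstacle is this last computation, and in particular the two points that rest essentially on ordinary reduction: that $(\ker\phi)^0$ is of multiplicative type (so that it breaks into $\mu_2$'s each contributing $e$), and the clean identification of its height $d$ with $\dim_{\F_2}(A_1(\bar K)[2] \cap A(\bar K)[\phi])$ through reduction of the torsion points. Making the measure-scaling identity $|\coker\hat\phi|/|\ker\hat\phi| = q^{\ord_K(\det\mathrm{Lie}\,\phi)}$ fully rigorous in the wildly ramified case $p = 2$ (via the formal logarithm or a direct Haar-measure argument) also needs some care, though the ratio is stable under the resulting torsion subtleties. As this statement recovers \cite[Thm.~A.1]{dokchitser2020parity}, I would ultimately align the argument with theirs.
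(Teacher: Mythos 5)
The paper does not prove this statement: it is imported verbatim from \cite[Thm.~A.1]{dokchitser2020parity}, so there is no in-paper proof to compare against. Your argument is a correct reconstruction of the standard proof of that result, and it follows essentially the same route as the cited appendix: the snake-lemma d\'evissage through $0 \to A_1(K) \to A(K) \to \tilde A(k) \to 0$, the vanishing of the special-fibre contribution because isogenous abelian varieties over a finite field have equally many points, the identification of the formal-group contribution with $q^{\ord_K(\det \textup{Lie}\,\phi)}$, and finally the connected--\'etale sequence of $\ker\phi$ together with ordinariness to see that the connected part is \'etale-locally $\mu_2^d$ with $d = \dim_{\F_2}\bigl(A_1(\bar K)[2] \cap A(\bar K)[\phi]\bigr)$, each $\mu_2$ contributing $\ord_K(2)$ to the determinant. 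One small slip: $A = \Jac C \times \prym(D/C)$ and $\Jac D$ have dimension $2g-1$, not $g$, so the Lie algebras are free $\O_K$-modules of rank $2g-1$; this does not affect the argument, since only the valuation of the determinant matters and the \'etale part of $\ker\phi$ contributes nothing to it. The points you flag as needing care (surjectivity of reduction on $K$-points via smoothness and Hensel, and the insensitivity of $|\coker\hat\phi|/|\ker\hat\phi|$ to torsion and wild ramification at $p=2$) are exactly the ones handled in the cited source, and your treatment of them is sound.
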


\section{Example Computation}

We now show how the local formula can be used by giving a worked example.

Consider $C \colon y^2 = f(x)g(x)$, where $f(x) = x^6 - 12x^5 + 48x^4 + 54x^3 + 60x^2 - 236x - 295$, $g(x) = (x+6)(x+2)$. We have Prym variety $\prym(D/C) = \Jac F$ where $F$ is the curve $y^2 = f(x)$, arising from the factorisation $f \cdot g$. The double cover $D$ has model $y^2 = x^{12} + 36x^{11} + 534x^{10} + 4094x^9 + 17667x^8 + 44018x^7 + 61093x^6 + 44018x^5 + 17667x^4 + 4094x^3 + 534x^2 + 36x + 1$. All curves have obvious rational points, and hence are nowhere deficient. By computing the discriminant of $C$, we find that the primes $p$ which potentially have non-trivial contribution to the local formula are $p = 2, 5, 7, 59, 653, 1201, 193793, 17283342701$ and $\infty$.

\subsection{$ p = \infty $}

By inspection, $n_{C/\R} = n_{D/\R} = 2$ and $n_{F/\R} = 1$, so by Theorem \ref{numbercomps}, $n_{\Jac C, \R} = n_{\Jac D, \R} = 2$ and $n_{\prym(D/C), \R} = 1$.

We now determine $| \ker \phi |_\R^0 |$. Let $\gamma_i$ be the roots of $f$, indexed so that $\gamma_{1,2}$ are real ($\gamma_1 < \gamma_2$), and $\gamma_{i} = \bar{\gamma}_{i+1}$ for $i = 3, 5$. Write $P_i = (\gamma_i, 0)$. We expect $\prym(D/C)(\R)^0$ to have four two-torsion points. By Corollary \ref{realcomps}, $[P_1, P_2], [P_3, P_4], [P_5, P_6]$ and $0$ are verified to be those four.
According to Lemma \ref{kernel}, we must determine the pre-images of these four points under $\pi^{*}$ and count which lie on $\Jac C(\R)^0$. For ease we write $P_7 = (-6, 0), P_8 = (-2, 0)$. Using Proposition \ref{pull-back}, $(\pi^{*})^{-1}[P_1, P_2] = \{ [P_1, P_2] , [P_1, P_2, P_7, P_8] \}$, $(\pi^{*})^{-1}[P_3, P_4] = \{ [P_3, P_4] , [P_3, P_4, P_7, P_8] \}$, $(\pi^{*})^{-1}[P_5, P_6] = \{ [P_5, P_6] , [P_5, P_6, P_7, P_8] \}$, $(\pi^{*})^{-1}(0) = \{ [P_7, P_8] , 0 \}$. By Corollary \ref{realcomps}, only the latter of each set lies on $\Jac C (\R)^0$. In particular, $| \ker \phi |_\R^0 | = 4$.

Then $\lambda_{C/\R, \phi_\R} = (-1)^{\ord_{2} \left( \frac{2}{2 \cdot 1 \cdot 4} \right)} = 1$.

\subsection{$p > 2$, finite} The Tamagawa numbers are found with SAGE, as in the table below

\begin{table}[h]
\centering
\begin{tabular}{c|ccccccc}
    & 5 & 7 & 59 & 653 & 1201 & 193793 & 17283342701 \\ \hline
$C$ & 1  & 1  & 1   & 1 & 2 & 2 & 1    \\
$F$ & 1  & 1  & 1   & 1 & 1 & 1 & 1   \\
$D$ & 1  & 1  & 1   & 1 & 1 & 1 & 1  
\end{tabular}
\end{table}

\subsection{$p = 2$}

Again we label $P_i = (\alpha_i, 0)$ for $i \in \{ 1, \ldots, 6\}$ with $f(\alpha_i) = 0$, and $P_7 = (-6, 0), P_8 = (-2, 0)$. $F$ has model $y^2 - (x^3 + 1)y = -3x^5 + 12x^4 + 13x^3 + 15x^2 - 59x - 74$, via $(x,y) \mapsto (x, \frac{y + x^3 + 1}{2})$, and reduces to the curve with LMFDB label 2.2.c\_d; in particular it has good ordinary reduction. Using \MAGMA, there are four $2$-torsion points in the kernel of reduction, from the points $[P_1, P_2], [P_3, P_4], [P_5, P_6]$ and $0$. To invoke Theorem \ref{2thm} we must consider the pre-image of these points under $\pi^{*}$ once more. We have $ (\pi^{*})^{-1}[P_1, P_2] = \{ [P_1, P_2] , [P_1, P_2, P_7, P_8] \}$, $(\pi^{*})^{-1}[P_3, P_4] = \{ [P_3, P_4] , [P_3, P_4, P_7, P_8] \}$, $(\pi^{*})^{-1}[P_5, P_6] = \{ [P_5, P_6] , [P_5, P_6, P_7, P_8] \}$, $(\pi^{*})^{-1}(0) = \{ [P_7, P_8] , 0 \}$. We determine which of these eight are in the kernel of reduction of $C$.

$C$ has model $y^2 - (x^4 + x)y = - x^7 - 9x^6 + 73x^5 + 267x^4 + 223x^3 - 366x^2 - 1298x - 885$ via $(x,y) \mapsto (x, \frac{y + x^4 + x}{2})$, and reduces to the curve with LMFDB label 3.2.b\_b\_d (with good ordinary reduction). \MAGMA gives that all are in the kernel of reduction. As both $C$ and $F$ have good ordinary reduction, we may invoke Theorem \ref{2thm} for
$$
\frac{|\coker\phi_{2}|}{| \ker\phi_{2} |} = 2^{\dim_{\F_2} 8} = 8,
$$
So $\lambda_{C/\Q_2,\phi_2} = (-1)^3 = -1$. 

Altogether, Theorem \ref{localformula} gives $(-1)^{\rk_2 \Jac C + \rk_2 \prym(D/C)} = -1$. However $F$ is semistable, so the $2$-parity conjecture is known for $\prym(D/C)/\Q$ \cite[Thm.~1.4]{dokchitser2020parity}, and computing root numbers for $\prym(D/C)$ we find that $(-1)^{\rk_2 \prym(D/C)} = - 1$. We thus isolate $\rk_2 \Jac C$ as even. Computing root numbers in SAGE, this agrees with $w_{\Jac C/K}$ (and hence Conjecture \ref{loceqroot}).

\begin{remark}
    In addition the the above example, we have sought to verify Conjecture \ref{loceqroot} for a number of curves (with non-trivial $2$-torsion over $\Q$) in a $2$-adic neighbourhood of $C$. We considered the 728 curves of the form
    $$
    y^2 = (f(x) + a_5 x^5 + a_4 x^4 + a_3 x^3 + a_2 x^2 + a_1 x + a_0) g(x)
    $$
    $a_i \in \{ 0, 8, 16 \}$ (excluding the example case where all $a_i$ are zero), with choice of double cover arising from this factorisation. In many instances the reduction type precluded computation of the $p$-adic contribution to the local formula. Nonetheless, the local formula could be evaluated successfully for 419 of them. In all such instances, we found Conjecture \ref{loceqroot} to hold. 
\end{remark}

\begin{remark}
We comment briefly on how the above example was found. In order to compute the local terms our main constraint was controlling both $C$ and $\prym(D/C)$ with Theorem \ref{2thm}. The recent development of \textit{cluster pictures} focused our search. It is known \cite[Prop.~8.4]{dokchitser2020parity} that hyperelliptic curves of genus $2$, have good ordinary reduction at $p = 2$ when they have cluster picture
\begin{center}
\clusterpicture
\Root[D] {} {first} {r1};
\Root[D] {} {r1} {r2};
\ClusterLD c1[][2] = (r1)(r2);
\Root[D] {} {c1} {r3};
\Root[D] {} {r3} {r4};
\ClusterLD c2[][2] = (r3)(r4);
\Root[D] {} {c2} {r5};
\Root[D] {} {r5} {r6};
\ClusterLD c3[][2] = (r5)(r6);
\ClusterLD c4[][0] = (c1)(c2)(c3);
\endclusterpicture
\end{center}
Hence we selected a polynomial $f(x)$ of degree 6 with this particular cluster picture, and then altered the second factor $g(x)$ until the curve $C \colon y^2 = f(x)g(x)$ has the required properties. In particular, when searching for a hyperelliptic curve of genus 3 with good ordinary reduction at $p = 2$, it was natural to try instances of $f$ and $g$ such that $C$ had cluster picture
\begin{center}
\clusterpicture
\Root[A] {} {first} {r1};
\Root[A] {} {r1} {r2};
\ClusterLD c1[][2] = (r1)(r2);
\Root[A] {} {c1} {r3};
\Root[A] {} {r3} {r4};
\ClusterLD c2[][2] = (r3)(r4);
\Root[A] {} {c2} {r5};
\Root[A] {} {r5} {r6};
\ClusterLD c3[][2] = (r5)(r6);
\Root[C] {} {c3} {r7};
\Root[C] {} {r7} {r8};
\ClusterLD c5[][2] = (r7)(r8);
\ClusterLD c6[][0] = (c1)(c2)(c3)(c5);
\endclusterpicture
\end{center}
(where \raisebox{0.4em}{\clusterpicture \Root[A] {}{first}{r1}; \endclusterpicture} indicates a root of $f$, \raisebox{0.4em}{\clusterpicture \Root[C] {}{first}{r1}; \endclusterpicture} a root of $g$). It is fortunate that the reduction of $\prym(D/C)$ can be seen so straightforwardly in the reduction of $C$ using cluster pictures. No such analogue is currently known for non-hyperelliptic genus $3$ curves, and we note that it seems particularly difficult to find examples of case III.d which are amenable to explicit computation. We did not find examples over $\Q$ where both $C$ and the Prym variety $\prym(D/C)$ had good reduction at the prime $2$ (let alone good ordinary reduction). However it is not clear to the author that no such example can exist.
\end{remark}

\bibliography{2selmerbib}
\bibliographystyle{plain}

\end{document}